\begin{document}
\newcommand {\emptycomment}[1]{} 

\baselineskip=14pt
\newcommand{\nc}{\newcommand}
\newcommand{\delete}[1]{}
\nc{\mfootnote}[1]{\footnote{#1}} 
\nc{\todo}[1]{\tred{To do:} #1}

\nc{\mlabel}[1]{\label{#1}}  
\nc{\mcite}[1]{\cite{#1}}  
\nc{\mref}[1]{\ref{#1}}  
\nc{\mbibitem}[1]{\bibitem{#1}} 

\delete{
\nc{\mlabel}[1]{\label{#1}  
{\hfill \hspace{1cm}{\bf{{\ }\hfill(#1)}}}}
\nc{\mcite}[1]{\cite{#1}{{\bf{{\ }(#1)}}}}  
\nc{\mref}[1]{\ref{#1}{{\bf{{\ }(#1)}}}}  
\nc{\mbibitem}[1]{\bibitem[\bf #1]{#1}} 
}

\newtheorem{thm}{Theorem}[section]
\newtheorem{lem}[thm]{Lemma}
\newtheorem{cor}[thm]{Corollary}
\newtheorem{pro}[thm]{Proposition}
\newtheorem{ex}[thm]{Example}
\newtheorem{rmk}[thm]{Remark}
\newtheorem{defi}[thm]{Definition}
\newtheorem{pdef}[thm]{Proposition-Definition}
\newtheorem{condition}[thm]{Condition}

\renewcommand{\labelenumi}{{\rm(\alph{enumi})}}
\renewcommand{\theenumi}{\alph{enumi}}

\nc{\tred}[1]{\textcolor{red}{#1}}
\nc{\tblue}[1]{\textcolor{blue}{#1}}
\nc{\tgreen}[1]{\textcolor{green}{#1}}
\nc{\tpurple}[1]{\textcolor{purple}{#1}}
\nc{\btred}[1]{\textcolor{red}{\bf #1}}
\nc{\btblue}[1]{\textcolor{blue}{\bf #1}}
\nc{\btgreen}[1]{\textcolor{green}{\bf #1}}
\nc{\btpurple}[1]{\textcolor{purple}{\bf #1}}

\nc{\cm}[1]{\textcolor{red}{Chengming:#1}}
\nc{\yy}[1]{\textcolor{blue}{Yanyong: #1}}
\nc{\lit}[2]{\textcolor{blue}{#1}{}} 
\nc{\yh}[1]{\textcolor{green}{Yunhe: #1}}


\nc{\twovec}[2]{\left(\begin{array}{c} #1 \\ #2\end{array} \right )}
\nc{\threevec}[3]{\left(\begin{array}{c} #1 \\ #2 \\ #3 \end{array}\right )}
\nc{\twomatrix}[4]{\left(\begin{array}{cc} #1 & #2\\ #3 & #4 \end{array} \right)}
\nc{\threematrix}[9]{{\left(\begin{matrix} #1 & #2 & #3\\ #4 & #5 & #6 \\ #7 & #8 & #9 \end{matrix} \right)}}
\nc{\twodet}[4]{\left|\begin{array}{cc} #1 & #2\\ #3 & #4 \end{array} \right|}

\nc{\rk}{\mathrm{r}}
\newcommand{\g}{\mathfrak g}
\newcommand{\h}{\mathfrak h}
\newcommand{\pf}{\noindent{$Proof$.}\ }
\newcommand{\frkg}{\mathfrak g}
\newcommand{\frkh}{\mathfrak h}
\newcommand{\Id}{\rm{Id}}
\newcommand{\gl}{\mathfrak {gl}}
\newcommand{\ad}{\mathrm{ad}}
\newcommand{\add}{\frka\frkd}
\newcommand{\frka}{\mathfrak a}
\newcommand{\frkb}{\mathfrak b}
\newcommand{\frkc}{\mathfrak c}
\newcommand{\frkd}{\mathfrak d}
\newcommand {\comment}[1]{{\marginpar{*}\scriptsize\textbf{Comments:} #1}}

\nc{\gensp}{V} 
\nc{\relsp}{\Lambda} 
\nc{\leafsp}{X}    
\nc{\treesp}{\overline{\calt}} 

\nc{\vin}{{\mathrm Vin}}    
\nc{\lin}{{\mathrm Lin}}    

\nc{\gop}{{\,\omega\,}}     
\nc{\gopb}{{\,\nu\,}}
\nc{\svec}[2]{{\tiny\left(\begin{matrix}#1\\
#2\end{matrix}\right)\,}}  
\nc{\ssvec}[2]{{\tiny\left(\begin{matrix}#1\\
#2\end{matrix}\right)\,}} 

\nc{\typeI}{local cocycle $3$-Lie bialgebra\xspace}
\nc{\typeIs}{local cocycle $3$-Lie bialgebras\xspace}
\nc{\typeII}{double construction $3$-Lie bialgebra\xspace}
\nc{\typeIIs}{double construction $3$-Lie bialgebras\xspace}

\nc{\bia}{{$\mathcal{P}$-bimodule ${\bf k}$-algebra}\xspace}
\nc{\bias}{{$\mathcal{P}$-bimodule ${\bf k}$-algebras}\xspace}

\nc{\rmi}{{\mathrm{I}}}
\nc{\rmii}{{\mathrm{II}}}
\nc{\rmiii}{{\mathrm{III}}}
\nc{\pr}{{\mathrm{pr}}}
\newcommand{\huaA}{\mathcal{A}}

\nc{\pll}{\beta}
\nc{\plc}{\epsilon}

\nc{\ass}{{\mathit{Ass}}}
\nc{\lie}{{\mathit{Lie}}}
\nc{\comm}{{\mathit{Comm}}}
\nc{\dend}{{\mathit{Dend}}}
\nc{\zinb}{{\mathit{Zinb}}}
\nc{\tdend}{{\mathit{TDend}}}
\nc{\prelie}{{\mathit{preLie}}}
\nc{\postlie}{{\mathit{PostLie}}}
\nc{\quado}{{\mathit{Quad}}}
\nc{\octo}{{\mathit{Octo}}}
\nc{\ldend}{{\mathit{ldend}}}
\nc{\lquad}{{\mathit{LQuad}}}

 \nc{\adec}{\check{;}} \nc{\aop}{\alpha}
\nc{\dftimes}{\widetilde{\otimes}} \nc{\dfl}{\succ} \nc{\dfr}{\prec}
\nc{\dfc}{\circ} \nc{\dfb}{\bullet} \nc{\dft}{\star}
\nc{\dfcf}{{\mathbf k}} \nc{\apr}{\ast} \nc{\spr}{\cdot}
\nc{\twopr}{\circ} \nc{\tspr}{\star} \nc{\sempr}{\ast}
\nc{\disp}[1]{\displaystyle{#1}}
\nc{\bin}[2]{ (_{\stackrel{\scs{#1}}{\scs{#2}}})}  
\nc{\binc}[2]{ \left (\!\! \begin{array}{c} \scs{#1}\\
    \scs{#2} \end{array}\!\! \right )}  
\nc{\bincc}[2]{  \left ( {\scs{#1} \atop
    \vspace{-.5cm}\scs{#2}} \right )}  
\nc{\sarray}[2]{\begin{array}{c}#1 \vspace{.1cm}\\ \hline
    \vspace{-.35cm} \\ #2 \end{array}}
\nc{\bs}{\bar{S}} \nc{\dcup}{\stackrel{\bullet}{\cup}}
\nc{\dbigcup}{\stackrel{\bullet}{\bigcup}} \nc{\etree}{\big |}
\nc{\la}{\longrightarrow} \nc{\fe}{\'{e}} \nc{\rar}{\rightarrow}
\nc{\dar}{\downarrow} \nc{\dap}[1]{\downarrow
\rlap{$\scriptstyle{#1}$}} \nc{\uap}[1]{\uparrow
\rlap{$\scriptstyle{#1}$}} \nc{\defeq}{\stackrel{\rm def}{=}}
\nc{\dis}[1]{\displaystyle{#1}} \nc{\dotcup}{\,
\displaystyle{\bigcup^\bullet}\ } \nc{\sdotcup}{\tiny{
\displaystyle{\bigcup^\bullet}\ }} \nc{\hcm}{\ \hat{,}\ }
\nc{\hcirc}{\hat{\circ}} \nc{\hts}{\hat{\shpr}}
\nc{\lts}{\stackrel{\leftarrow}{\shpr}}
\nc{\rts}{\stackrel{\rightarrow}{\shpr}} \nc{\lleft}{[}
\nc{\lright}{]} \nc{\uni}[1]{\tilde{#1}} \nc{\wor}[1]{\check{#1}}
\nc{\free}[1]{\bar{#1}} \nc{\den}[1]{\check{#1}} \nc{\lrpa}{\wr}
\nc{\curlyl}{\left \{ \begin{array}{c} {} \\ {} \end{array}
    \right .  \!\!\!\!\!\!\!}
\nc{\curlyr}{ \!\!\!\!\!\!\!
    \left . \begin{array}{c} {} \\ {} \end{array}
    \right \} }
\nc{\leaf}{\ell}       
\nc{\longmid}{\left | \begin{array}{c} {} \\ {} \end{array}
    \right . \!\!\!\!\!\!\!}
\nc{\ot}{\otimes} \nc{\sot}{{\scriptstyle{\ot}}}
\nc{\otm}{\overline{\ot}}
\nc{\ora}[1]{\stackrel{#1}{\rar}}
\nc{\ola}[1]{\stackrel{#1}{\la}}
\nc{\pltree}{\calt^\pl}
\nc{\epltree}{\calt^{\pl,\NC}}
\nc{\rbpltree}{\calt^r}
\nc{\scs}[1]{\scriptstyle{#1}} \nc{\mrm}[1]{{\rm #1}}
\nc{\dirlim}{\displaystyle{\lim_{\longrightarrow}}\,}
\nc{\invlim}{\displaystyle{\lim_{\longleftarrow}}\,}
\nc{\mvp}{\vspace{0.5cm}} \nc{\svp}{\vspace{2cm}}
\nc{\vp}{\vspace{8cm}} \nc{\proofbegin}{\noindent{\bf Proof: }}
\nc{\proofend}{$\blacksquare$ \vspace{0.5cm}}
\nc{\freerbpl}{{F^{\mathrm RBPL}}}
\nc{\sha}{{\mbox{\cyr X}}}  
\nc{\ncsha}{{\mbox{\cyr X}^{\mathrm NC}}} \nc{\ncshao}{{\mbox{\cyr
X}^{\mathrm NC,\,0}}}
\nc{\shpr}{\diamond}    
\nc{\shprm}{\overline{\diamond}}    
\nc{\shpro}{\diamond^0}    
\nc{\shprr}{\diamond^r}     
\nc{\shpra}{\overline{\diamond}^r}
\nc{\shpru}{\check{\diamond}} \nc{\catpr}{\diamond_l}
\nc{\rcatpr}{\diamond_r} \nc{\lapr}{\diamond_a}
\nc{\sqcupm}{\ot}
\nc{\lepr}{\diamond_e} \nc{\vep}{\varepsilon} \nc{\labs}{\mid\!}
\nc{\rabs}{\!\mid} \nc{\hsha}{\widehat{\sha}}
\nc{\lsha}{\stackrel{\leftarrow}{\sha}}
\nc{\rsha}{\stackrel{\rightarrow}{\sha}} \nc{\lc}{\lfloor}
\nc{\rc}{\rfloor}
\nc{\tpr}{\sqcup}
\nc{\nctpr}{\vee}
\nc{\plpr}{\star}
\nc{\rbplpr}{\bar{\plpr}}
\nc{\sqmon}[1]{\langle #1\rangle}
\nc{\forest}{\calf}
\nc{\altx}{\Lambda_X} \nc{\vecT}{\vec{T}} \nc{\onetree}{\bullet}
\nc{\Ao}{\check{A}}
\nc{\seta}{\underline{\Ao}}
\nc{\deltaa}{\overline{\delta}}
\nc{\trho}{\tilde{\rho}}

\nc{\rpr}{\circ}
\nc{\dpr}{{\tiny\diamond}}
\nc{\rprpm}{{\rpr}}

\nc{\mmbox}[1]{\mbox{\ #1\ }} \nc{\ann}{\mrm{ann}}
\nc{\Aut}{\mrm{Aut}} \nc{\can}{\mrm{can}}
\nc{\twoalg}{{two-sided algebra}\xspace}
\nc{\colim}{\mrm{colim}}
\nc{\Cont}{\mrm{Cont}} \nc{\rchar}{\mrm{char}}
\nc{\cok}{\mrm{coker}} \nc{\dtf}{{R-{\rm tf}}} \nc{\dtor}{{R-{\rm
tor}}}
\renewcommand{\det}{\mrm{det}}
\nc{\depth}{{\mrm d}}
\nc{\Div}{{\mrm Div}} \nc{\End}{\mrm{End}} \nc{\Ext}{\mrm{Ext}}
\nc{\Fil}{\mrm{Fil}} \nc{\Frob}{\mrm{Frob}} \nc{\Gal}{\mrm{Gal}}
\nc{\GL}{\mrm{GL}} \nc{\Hom}{\mrm{Hom}} \nc{\hsr}{\mrm{H}}
\nc{\hpol}{\mrm{HP}} \nc{\id}{\mrm{id}} \nc{\im}{\mrm{im}}
\nc{\incl}{\mrm{incl}} \nc{\length}{\mrm{length}}
\nc{\LR}{\mrm{LR}} \nc{\mchar}{\rm char} \nc{\NC}{\mrm{NC}}
\nc{\mpart}{\mrm{part}} \nc{\pl}{\mrm{PL}}
\nc{\ql}{{\QQ_\ell}} \nc{\qp}{{\QQ_p}}
\nc{\rank}{\mrm{rank}} \nc{\rba}{\rm{RBA }} \nc{\rbas}{\rm{RBAs }}
\nc{\rbpl}{\mrm{RBPL}}
\nc{\rbw}{\rm{RBW }} \nc{\rbws}{\rm{RBWs }} \nc{\rcot}{\mrm{cot}}
\nc{\rest}{\rm{controlled}\xspace}
\nc{\rdef}{\mrm{def}} \nc{\rdiv}{{\rm div}} \nc{\rtf}{{\rm tf}}
\nc{\rtor}{{\rm tor}} \nc{\res}{\mrm{res}} \nc{\SL}{\mrm{SL}}
\nc{\Spec}{\mrm{Spec}} \nc{\tor}{\mrm{tor}} \nc{\Tr}{\mrm{Tr}}
\nc{\mtr}{\mrm{sk}}

\nc{\ab}{\mathbf{Ab}} \nc{\Alg}{\mathbf{Alg}}
\nc{\Algo}{\mathbf{Alg}^0} \nc{\Bax}{\mathbf{Bax}}
\nc{\Baxo}{\mathbf{Bax}^0} \nc{\RB}{\mathbf{RB}}
\nc{\RBo}{\mathbf{RB}^0} \nc{\BRB}{\mathbf{RB}}
\nc{\Dend}{\mathbf{DD}} \nc{\bfk}{{\bf k}} \nc{\bfone}{{\bf 1}}
\nc{\base}[1]{{a_{#1}}} \nc{\detail}{\marginpar{\bf More detail}
    \noindent{\bf Need more detail!}
    \svp}
\nc{\Diff}{\mathbf{Diff}} \nc{\gap}{\marginpar{\bf
Incomplete}\noindent{\bf Incomplete!!}
    \svp}
\nc{\FMod}{\mathbf{FMod}} \nc{\mset}{\mathbf{MSet}}
\nc{\rb}{\mathrm{RB}} \nc{\Int}{\mathbf{Int}}
\nc{\Mon}{\mathbf{Mon}}
\nc{\remarks}{\noindent{\bf Remarks: }}
\nc{\OS}{\mathbf{OS}} 
\nc{\Rep}{\mathbf{Rep}}
\nc{\Rings}{\mathbf{Rings}} \nc{\Sets}{\mathbf{Sets}}
\nc{\DT}{\mathbf{DT}}

\nc{\BA}{{\mathbb A}} \nc{\CC}{{\mathbb C}} \nc{\DD}{{\mathbb D}}
\nc{\EE}{{\mathbb E}} \nc{\FF}{{\mathbb F}} \nc{\GG}{{\mathbb G}}
\nc{\HH}{{\mathbb H}} \nc{\LL}{{\mathbb L}} \nc{\NN}{{\mathbb N}}
\nc{\QQ}{{\mathbb Q}} \nc{\RR}{{\mathbb R}} \nc{\BS}{{\mathbb{S}}} \nc{\TT}{{\mathbb T}}
\nc{\VV}{{\mathbb V}} \nc{\ZZ}{{\mathbb Z}}


\nc{\calao}{{\mathcal A}} \nc{\cala}{{\mathcal A}}
\nc{\calc}{{\mathcal C}} \nc{\cald}{{\mathcal D}}
\nc{\cale}{{\mathcal E}} \nc{\calf}{{\mathcal F}}
\nc{\calfr}{{{\mathcal F}^{\,r}}} \nc{\calfo}{{\mathcal F}^0}
\nc{\calfro}{{\mathcal F}^{\,r,0}} \nc{\oF}{\overline{F}}
\nc{\calg}{{\mathcal G}} \nc{\calh}{{\mathcal H}}
\nc{\cali}{{\mathcal I}} \nc{\calj}{{\mathcal J}}
\nc{\call}{{\mathcal L}} \nc{\calm}{{\mathcal M}}
\nc{\caln}{{\mathcal N}} \nc{\calo}{{\mathcal O}}
\nc{\calp}{{\mathcal P}} \nc{\calq}{{\mathcal Q}} \nc{\calr}{{\mathcal R}}
\nc{\calt}{{\mathcal T}} \nc{\caltr}{{\mathcal T}^{\,r}}
\nc{\calu}{{\mathcal U}} \nc{\calv}{{\mathcal V}}
\nc{\calw}{{\mathcal W}} \nc{\calx}{{\mathcal X}}
\nc{\CA}{\mathcal{A}}

\nc{\fraka}{{\mathfrak a}} \nc{\frakB}{{\mathfrak B}}
\nc{\frakb}{{\mathfrak b}} \nc{\frakd}{{\mathfrak d}}
\nc{\oD}{\overline{D}}
\nc{\frakF}{{\mathfrak F}} \nc{\frakg}{{\mathfrak g}}
\nc{\frakm}{{\mathfrak m}} \nc{\frakM}{{\mathfrak M}}
\nc{\frakMo}{{\mathfrak M}^0} \nc{\frakp}{{\mathfrak p}}
\nc{\frakS}{{\mathfrak S}} \nc{\frakSo}{{\mathfrak S}^0}
\nc{\fraks}{{\mathfrak s}} \nc{\os}{\overline{\fraks}}
\nc{\frakT}{{\mathfrak T}}
\nc{\oT}{\overline{T}}
\nc{\frakX}{{\mathfrak X}} \nc{\frakXo}{{\mathfrak X}^0}
\nc{\frakx}{{\mathbf x}}
\nc{\frakTx}{\frakT}      
\nc{\frakTa}{\frakT^a}        
\nc{\frakTxo}{\frakTx^0}   
\nc{\caltao}{\calt^{a,0}}   
\nc{\ox}{\overline{\frakx}} \nc{\fraky}{{\mathfrak y}}
\nc{\frakz}{{\mathfrak z}} \nc{\oX}{\overline{X}}

\font\cyr=wncyr10

\nc{\redtext}[1]{\textcolor{red}{#1}}


\title{Classification of compatible left-symmetric conformal algebraic structures on the Lie conformal algebra $\mathcal{W}(a,b)$}

\author{Deng Liu}
\address{College of Science, Zhejiang Agriculture and Forestry University,
Hangzhou, 311300, P.R.China}
\email{17816898206@163.com}

\author{Yanyong Hong}
\address{College of Science, Zhejiang Agriculture and Forestry University,
Hangzhou, 311300, P.R.China}
\email{hongyanyong2008@yahoo.com}

\author{Hao Zhou}
\address{College of Science, Zhejiang Agriculture and Forestry University,
Hangzhou, 311300, P.R.China}
\email{haotryagain@sina.com}

\author{Nuan Zhang}
\address{College of Science, Zhejiang Agriculture and Forestry University,
Hangzhou, 311300, P.R.China}
\email{mariaj77@163.com}

\subjclass[2010]{17A30, 17D25, 17A60, 17B69}
\keywords{Lie conformal algebra, left-symmetric conformal algebra, coefficient algebra, left-symmetric algebra}

\begin{abstract}
In this paper, under some natural condition,  a complete classification of compatible left-symmetric conformal algebraic structures on the Lie conformal algebra $\mathcal{W}(a,b)$ is presented. Moreover, applying this result, we obtain a class of compatible left-symmetric algebraic structures on the coefficient algebra of $\mathcal{W}(a,b)$.

\end{abstract}

\maketitle

\section{Introduction}
Left-symmetric algebras are a class of Lie-admissible algebras whose commutators are Lie algebras. They originated from the study of convex homogeneous cones \cite{V}, affine manifolds and affine structures on Lie
groups \cite{Ko}, deformation of associative algebras \cite{G} and so on. Novikov
algebras are left-symmetric algebras whose right multiplications are commutative. It was essentially stated in \cite{GD} that they correspond to
certain Hamiltonian operators. Such algebraic structures also appeared
in \cite{BN} from the point of view of Poisson structures of
hydrodynamic type. A
survey about left-symmetric algebras was given in \cite{Bu1}, which showed that they play an important role in many fields in mathematics and mathematical physics such as vector fields, rooted tree algebras, words in two letters, operad theory, vertex algebras,  deformation complexes of algebras, affine manifolds, convex homogeneous cones,  left-invariant affine structures on Lie groups (see \cite{Bu1} and the references therein). An important problem in the study of left-symmetric algebras is to determine all compatible left-symmetric algebraic structures on a Lie algebra. It is known that when the character of the field is $0$, all finite-dimensional semisimple Lie algebras have no compatible left-symmetric algebraic structures. Moreover, there are a lot of results in the study of infinite-dimensional cases. For example, compatible left-symmetric algebraic structures on
Witt algebra and Virasoro algebra were studied in \cite{Ch, KCB, Ku1, Ku2, Os, TB, LGB, Xu} and so on. In particular,
a complete classification of graded compatible left-symmetric algebraic structures on Witt algebra and Virasoro algebra was given in \cite{KCB}. The case of super-Virasoro algebra can refer to \cite{KB}. Moreover, some similar classifications of compatible left-symmetric algebraic structures on the twisted Heisenberg-Virasoro algebra and $W$-algebra
$W(2,2)$ were given in \cite{CL1,CL2}. It should be pointed out that these researches heavily depend on the result of a classification of compatible graded left-symmetric algebraic structures on Witt algebra in \cite{KCB}.

In fact, these infinite-dimensional Lie algebras mentioned above are all formal distribution Lie algebras (see \cite{K1}). Moreover,
a formal distribution Lie algebra can be seen as a Lie conformal algebra (see \cite{K1}). The notion of Lie conformal algebras, introduced by
V.Kac, gives an
axiomatic description of the operator product expansion (or rather
its Fourier transform) of chiral fields in conformal field theory. It is a wonderful tool to study vertex algebras (see \cite{K1}). Moreover, Lie conformal algebras have many applications in the theory of infinite-dimensional Lie algebras satisfying the locality property in \cite{K} and Hamiltonian formalism in the theory of nonlinear evolution equations (see \cite{BDK}).
The structure theory (see \cite{DK1}), cohomology theory (see \cite{BKV}) and representation theory (see \cite{CK1})
of finite Lie conformal algebras
have been well developed. It is known that the category of Lie conformal algebras is almost equivalent to the category of formal distribution Lie algebras (see \cite{K1}). Witt algebra, Virasoro algebra, twisted Heisenberg-Virasoro algebra, and $W(2,2)$ can be regarded as coefficient algebras of the corresponding Lie conformal algebras. In addition, to  investigate whether there exist compatible left-symmetric algebraic structures on formal distribution Lie algebras, a definition of left-symmetric conformal algebras was introduced in \cite{HL}, which can be used to construct vertex algebras. It was also shown that the commutator of a left-symmetric conformal algebra in the conformal sense is a Lie conformal algebra, and if a Lie conformal algebra has a compatible left-symmetric conformal algebraic structure, its coefficient algebra has the corresponding compatible left-symmetric algebraic structure. Motivated by the study on compatible left-symmetric algebraic structures on infinite-dimensional Lie algebras, a natural idea is to study compatible left-symmetric conformal algebraic structures on some Lie conformal algebras corresponding to some important infinite-dimensional Lie algebras. Because we can obtain some new compatible left-symmetric algebraic structures on the corresponding Lie algebras, this study is meaningful. Since the Lie conformal algebras corresponding to the twisted Heisenberg-Virasoro algebra and $W(2,2)$ without central extensions are of rank $2$,
our studying object in this paper is a class of more generalized Lie conformal algebras $\mathcal{W}(a,b)$ of rank $2$.
$\mathcal{W}(1,0)$ is just the Heisenberg-Virasoro Lie conformal algebra. Its structure, representation and cohomology were studied in \cite{LY, WY, YW1}. $\mathcal{W}(2,0)$ is just the $W(2,2)$ Lie conformal algebra. The study of its structure and representation can refer to \cite{YW2, WY}. In this paper, our aim is to study compatible left-symmetric conformal algebraic structures on $\mathcal{W}(a,b)=\mathbb{C}[\partial]L\oplus\mathbb{C}[\partial]W$. According to
that in the study of compatible left-symmetric algebraic structures on the twisted Heisenberg-Virasoro algebra and $W(2,2)$,
\cite{CL1,CL2} assumed that $\oplus_{i=-\infty}^{+\infty}\mathbb{C}L_i$ is a graded left-symmetric algebra, in this paper, we also assume that $\mathbb{C}[\partial]L$ is a left-symmetric conformal algebra in $\mathcal{W}(a,b)$.
Under this condition, we present a complete classification of compatible left-symmetric conformal algebraic structures on $\mathcal{W}(a,b)$. Moreover, applying the obtained result, we can get some compatible left-symmetric algebraic structures on the coefficient algebra of $\mathcal{W}(a,b)$. It should be pointed out that it is hard to give a complete classification of left-symmetric conformal algebras of rank 2 which are free as $\mathbb{C}[\partial]$-modules.

Throughout this paper, denote by $\mathbb{C}$ the field of complex
numbers; $\mathbb{N}$ the set of natural numbers, i.e.
$\mathbb{N}=\{0, 1, 2,\cdots\}$; $\mathbb{Z}$ the set of integer
numbers.
Moreover, if $A$ is a vector space, then the space of polynomials of $\lambda$ with coefficients in $A$ is denoted by $A[\lambda]$.

\section{Preliminaries}
In this section, we will introduce the definitions and some relevant results about left-symmetric conformal algebras and Lie conformal algebras. These facts can refer to \cite{K1} and \cite{HL}.

\begin{defi} \label{21}{\rm
A {\bf conformal algebra} $R$ is a $\mathbb{C}[\partial]$-module
endowed with a $\mathbb{C}$-bilinear map $R\times R\rightarrow
 R[\lambda]$ denoted by $a\times b\rightarrow
a_{\lambda} b$ satisfying
\begin{eqnarray}
(\partial a)_{\lambda}b=-\lambda a_{\lambda}b,  \quad
a_{\lambda}(\partial b)=(\partial+\lambda)a_{\lambda}b.\end{eqnarray}

A {\bf Lie conformal algebra} $R$ is a conformal algebra with a $\mathbb{C}$-bilinear
map $[\cdot_\lambda \cdot]: R\times R\rightarrow  R[\lambda]$ satisfying
\begin{eqnarray*}
&&[a_\lambda b]=-[b_{-\lambda-\partial}a],~~~~\text{(skew-symmetry)}\\
&&[a_\lambda[b_\mu c]]=[[a_\lambda b]_{\lambda+\mu} c]+[b_\mu[a_\lambda c]],~~~~~~\text{(Jacobi identity)}
\end{eqnarray*}
for $a$, $b$, $c\in R$.

A {\bf left-symmetric conformal algebra} $R$ is a conformal algebra with a $\mathbb{C}$-bilinear
map $\cdot_\lambda \cdot: R\times R\rightarrow R[\lambda]$ satisfying
\begin{eqnarray}
(a_{\lambda}b)_{\lambda+\mu}c-a_{\lambda}(b_\mu
c)=(b_{\mu}a)_{\lambda+\mu}c-b_\mu(a_\lambda c),
\end{eqnarray}
for $a$, $b$, $c\in R$.}
\end{defi}

Here, $a_\lambda b=\sum_{n=0}^\infty\frac{\lambda^{n}}{n!}(a_{(n)}b)$, where $a_{(n)}b$ is called {\bf the $n$-th product }of $a$ and $b$. The notions of a homomorphism, ideal and subalgebra of a conformal algebra are defined as usual. A conformal algebra is called {\bf finite}, if it is finitely generated as a $\mathbb{C}[\partial]$-module. The {\bf rank} of a conformal
algebra is its rank as a $\mathbb{C}[\partial]$-module.

By Definition \ref{21}, we can easily obtain
\begin{pro}(see Proposition 2.5 in \cite{HL})
Suppose that $(A, \cdot_\lambda \cdot)$ is a left-symmetric conformal algebra. Then the following
$\lambda$-bracket
\begin{equation}\label{3}[a_\lambda b]=a_\lambda
b-b_{-\lambda-\partial}a,~~~~~~a, ~~b \in A,\end{equation}
can define a Lie conformal algebra $\mathfrak{g}(A)$.
\end{pro}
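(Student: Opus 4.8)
The plan is to verify directly that the $\lambda$-bracket defined in \eqref{3} satisfies the two axioms of a Lie conformal algebra, namely skew-symmetry and the Jacobi identity, using only the conformal sesquilinearity relations and the left-symmetry axiom (2). First I would check that $[\cdot_\lambda\cdot]$ is a well-defined conformal bilinear map: since $a_\lambda b\in A[\lambda]$ and the substitution $-\lambda-\partial$ in $b_{-\lambda-\partial}a$ is the standard one, the expression $[a_\lambda b]=a_\lambda b-b_{-\lambda-\partial}a$ lands in $A[\lambda]$, and the two sesquilinearity identities for $[\cdot_\lambda\cdot]$ follow from those for $\cdot_\lambda\cdot$ by a routine substitution argument (the sign flips in $(\partial a)_\lambda b=-\lambda a_\lambda b$ combine correctly under the $-\lambda-\partial$ replacement).

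Skew-symmetry is essentially built into the definition. I would compute $[b_\lambda a]=b_\lambda a-a_{-\lambda-\partial}b$ and then apply the operator substitution $\lambda\mapsto -\lambda-\partial$ to $[b_\lambda a]$, tracking carefully how $-\lambda-\partial$ acts when nested. The identity $[a_\lambda b]=-[b_{-\lambda-\partial}a]$ should drop out after confirming that applying the substitution twice returns the original variable, a standard feature of conformal algebra computations; the main care here is bookkeeping of the formal variable shifts.

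The substantive step is the Jacobi identity. The strategy is to expand all three terms
$[a_\lambda[b_\mu c]]$, $[[a_\lambda b]_{\lambda+\mu}c]$, and $[b_\mu[a_\lambda c]]$
using \eqref{3}, producing a collection of triple products of the form $x_\alpha(y_\beta z)$ and $(x_\alpha y)_\beta z$ with various arguments and variable shifts. I would then reorganize these terms and repeatedly invoke the left-symmetry axiom (2), which says precisely that the ``associator'' $(a_\lambda b)_{\lambda+\mu}c-a_\lambda(b_\mu c)$ is symmetric in the pair $(a,\lambda)$ and $(b,\mu)$. The goal is to show that, after substituting \eqref{3} everywhere, the Jacobi combination reduces to a sum of left-symmetry relations (2), each of which vanishes.

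The main obstacle will be the term involving $[[a_\lambda b]_{\lambda+\mu}c]$, since $[a_\lambda b]=a_\lambda b-b_{-\lambda-\partial}a$ sits inside another bracket, and one must correctly propagate the formal variable $\lambda+\mu$ together with the internal shift $-\lambda-\partial$ acting on $a$ and $c$. Getting these nested substitutions right — in particular verifying that the shifted arguments line up so that axiom (2) can be applied verbatim — is where the bulk of the care lies. Since this is precisely Proposition 2.5 of \cite{HL}, I expect the computation to close cleanly once the variable-shift conventions are handled consistently, and I would cite \cite{HL} for the full term-by-term cancellation rather than reproduce it in detail.
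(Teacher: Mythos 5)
Your plan is correct and matches what the paper does: the paper offers no proof of its own, simply noting that the statement follows routinely from Definition 2.1 and citing Proposition 2.5 of \cite{HL}, and your outline (sesquilinearity, skew-symmetry via the involution $\lambda\mapsto-\lambda-\partial$, and Jacobi reduced to the symmetry of the associator) is exactly the standard verification carried out there.
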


\begin{rmk}
In the above proposition, $\mathfrak{g}(A)$ is called the
{\bf sub-adjacent Lie conformal algebra} of $A$ and $A$ is also called {\bf a
compatible left-symmetric conformal algebraic structure} on the Lie
conformal algebra $\mathfrak{g}(A)$.
\end{rmk}

Assume that $R$ is a (Lie or left-symmetric) conformal algebra. Let Coeff$(R)$ be the quotient
of the vector space with basis $a_n$ $(a\in R, n\in\mathbb{Z})$ by
the subspace spanned over $\mathbb{C}$ by
elements:\\
$$(\alpha a)_n-\alpha a_n,~~(a+b)_n-a_n-b_n,~~(\partial
a)_n+na_{n-1},~~~\text{where}~~a,~~b\in R,~~\alpha\in \mathbb{C},~~n\in
\mathbb{Z}.$$ We can define the following operation on Coeff$(R)$:
\begin{equation}\label{106}
a_m\cdot b_n=\sum_{j\in \mathbb{N}}\left(\begin{array}{ccc}
m\\j\end{array}\right)(a_{(j)}b)_{m+n-j}.\end{equation} Then
Coeff$(R)$ is a (Lie or left-symmetric) algebra (see \cite{K1}).

\begin{pro}\label{pp1}(see Theorem 4.7 in \cite{HL})
If $A$ is a compatible left-symmetric conformal algebra on a Lie conformal algebra $R$, then Coeff$(A)$ is a compatible left-symmetric algebra on the Lie algebra Coeff$(R)$.
\end{pro}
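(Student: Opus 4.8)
The plan is to establish Proposition \ref{pp1} by reducing it to the definitional identities of a left-symmetric conformal algebra together with the multiplication formula \eqref{106} on the coefficient algebra. The starting point is the observation, already recorded in the excerpt, that $\mathrm{Coeff}(R)$ is a (Lie or left-symmetric) algebra via \eqref{106}. So there are exactly two things to verify: first, that $\mathrm{Coeff}(A)$ is a left-symmetric algebra under the induced product; and second, that the sub-adjacent Lie algebra of $\mathrm{Coeff}(A)$ coincides with the Lie algebra $\mathrm{Coeff}(R)$. The first point is immediate from the cited fact applied to the left-symmetric conformal algebra $A$. The substance of the proof is therefore the compatibility assertion: I would need to check that the commutator $a_m \cdot b_n - b_n \cdot a_m$ computed inside $\mathrm{Coeff}(A)$ agrees with the bracket in $\mathrm{Coeff}(R)$ coming from the Lie conformal bracket $[\cdot_\lambda \cdot]$ on $R = \mathfrak{g}(A)$.

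The key step is to translate the conformal relation \eqref{3}, namely $[a_\lambda b] = a_\lambda b - b_{-\lambda-\partial} a$, into a statement about $n$-th products and then about coefficient-algebra products. Concretely, I would expand both sides in powers of $\lambda$ to read off the $n$-th bracket $[a_{(n)} b]$ in terms of the $n$-th products $a_{(j)} b$ and $b_{(j)} a$. The subtlety is the term $b_{-\lambda-\partial} a$: substituting $-\lambda-\partial$ for the formal variable produces both a sign $(-1)^j$ and, crucially, powers of $\partial$, and the $\partial$-action must then be converted into index shifts via the defining relation $(\partial a)_n = -n a_{n-1}$ built into $\mathrm{Coeff}(R)$. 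I would carry this out to obtain an explicit formula for $[a_{(n)} b]$ and feed it into \eqref{106} to compute the commutator bracket on coefficients.

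The main obstacle, and the part that requires genuine care rather than routine manipulation, is matching the binomial-coefficient bookkeeping on the two sides. The product \eqref{106} carries a factor $\binom{m}{j}$ and the skew term $b_{-\lambda-\partial} a$ carries, after expansion, a convolution of $(-1)^j$, a binomial factor from $(-\lambda-\partial)^j$, and the index shifts from $\partial$. Showing that $a_m \cdot b_n - b_n \cdot a_m$ equals $[a_m, b_n]$ computed directly from \eqref{106} applied to the conformal bracket amounts to an identity of sums of products of binomial coefficients; the cleanest route is to verify that the map $a \mapsto a_n$ (for each fixed $n$) is compatible with passing from the conformal product to the coefficient product, so that the coefficient functor sends the conformal identity \eqref{3} to the corresponding algebra-level identity. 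Once that commutator identity is in hand, the conclusion follows formally: $\mathrm{Coeff}(A)$ is left-symmetric, its commutator bracket is the bracket of $\mathrm{Coeff}(R)$, and hence $\mathrm{Coeff}(A)$ is by definition a compatible left-symmetric algebraic structure on the Lie algebra $\mathrm{Coeff}(R)$.

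Finally, I would note that this is essentially a functoriality statement: the coefficient construction $R \mapsto \mathrm{Coeff}(R)$ is compatible with the forgetful passage from a left-symmetric (conformal) structure to its sub-adjacent Lie (conformal) structure. Making that functoriality precise — that $\mathrm{Coeff}$ intertwines the two ``sub-adjacent'' operations — is exactly what the binomial identity above certifies, and it is the one place where the formal variable substitution $\lambda \mapsto -\lambda-\partial$ interacts nontrivially with the index-shift relation defining $\mathrm{Coeff}$.
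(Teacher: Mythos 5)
The paper does not actually prove Proposition \ref{pp1}: it is quoted verbatim from Theorem 4.7 of \cite{HL}, so there is no internal argument to compare yours against. Your outline is nonetheless the standard and correct route. The only nontrivial content is the compatibility claim you isolate, namely that $a_m\cdot b_n-b_n\cdot a_m$, computed from (\ref{106}) for the left-symmetric product, coincides with $[a_m,b_n]$ computed from (\ref{106}) for the bracket (\ref{3}); and you correctly identify the crux as the interaction of the substitution $\lambda\mapsto-\lambda-\partial$ with the relation $(\partial a)_n=-na_{n-1}$. Where your write-up falls short of a proof is that you stop at the assertion that this ``amounts to an identity of sums of products of binomial coefficients'' without exhibiting or verifying it --- and since that identity is the entire mathematical content of the proposition, what you have is a plan rather than a proof. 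The identity does close up: writing $b_{-\lambda-\partial}a=\sum_k\frac{\lambda^k}{k!}\sum_{p\ge 0}\frac{(-1)^{k+p}}{p!}\partial^{p}\bigl(b_{(k+p)}a\bigr)$ and using $(\partial^{p}x)_N=(-1)^{p}N(N-1)\cdots(N-p+1)\,x_{N-p}$ inside (\ref{106}), the required equality of the $\lambda\mapsto-\lambda-\partial$ contribution with $b_n\cdot a_m=\sum_j\binom{n}{j}(b_{(j)}a)_{m+n-j}$ reduces, after collecting the coefficient of $(b_{(j)}a)_{m+n-j}$, to
\begin{align*}
\sum_{k=0}^{j}(-1)^{k}\binom{m}{k}\binom{m+n-k}{j-k}=\binom{n}{j},
\end{align*}
which follows from $\sum_{k}\binom{m}{k}(-x)^{k}(1+x)^{m-k}=1$ upon extracting the coefficient of $x^{j}$ in $(1+x)^{n}$, and holds as a polynomial identity in $m,n$, hence for all integer indices. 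With that identity supplied, your argument is complete; as you observe, it is exactly the same computation that shows $\mathrm{Coeff}$ of a Lie conformal algebra satisfies skew-symmetry, and it is presumably the content of the proof in \cite{HL}.
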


\begin{ex}\label{r1}
The Virasoro Lie conformal algebra $Vir=\mathbb{C}[\partial]L$ is of the following $\lambda$-bracket
\begin{align*}
[L_\lambda L]=(\partial+2\lambda)L.
\end{align*}

Any compatible left-symmetric conformal algebra over $Vir$ is of the following form (see Theorem 3.2 in \cite{HL}):
\begin{align*}
Vir=\mathbb{C}[\partial]L,~~~~~L_\lambda L=(\partial+\lambda+c)L,
\end{align*}
where $c\in \mathbb{C}$.

By the definition of a coefficient algebra, Coeff$(Vir)$ is isomorphic to Witt algebra. Moreover, by Proposition \ref{pp1},
there are the following compatible left-symmetric algebraic structures on Witt algebra:
\begin{align*}
Coeff(Vir)=\bigoplus_{i=-\infty}^{+\infty}\mathbb{C}L_i,~~~~~L_i\circ L_j=cL_{i+j+1}-(j+1)L_{i+j}.
\end{align*}
The detailed explanation can be referred to Remark 3.3 in \cite{HL}.
\end{ex}

In this paper, we will focus on a class of Lie conformal algebras of rank 2.
\begin{defi}
$\mathcal{W}(a,b)=\mathbb{C}[\partial]L\oplus\mathbb{C}[\partial]W$ is a Lie conformal algebra with the following $\lambda$-brackets
\begin{align*}
[L_\lambda L]=(\partial+2\lambda)L,~~[L_\lambda W]=(\partial+a\lambda+b)W,~~[W_\lambda W]=0,
\end{align*}
where $a$, $b\in \mathbb{C}$.
\end{defi}

According to the definition of a coefficient algebra, Coeff$(\mathcal{W}(a,b))=\bigoplus_{i=-\infty}^{+\infty}\mathbb{C}L_i\oplus\bigoplus_{i=-\infty}^{+\infty}\mathbb{C}W_i$
is isomorphic to $\bigoplus_{i=-\infty}^{+\infty}\mathbb{C}L_i\oplus\bigoplus_{i=-\infty}^{+\infty}\mathbb{C}W_i$ with Lie brackets as follows:
\begin{align}
\label{ww1}[L_m,L_n]=(m-n)L_{m+n},~~~~~~[W_m,W_n]=0.\\
\label{ww2}[L_m,W_n]=((m+1)(a-1)-(n+1))W_{m+n}+bW_{m+n+1},
\end{align}
through the isomorphism $\varphi(L_i)=L_{i+1}$ and $\varphi(W_i)=W_{i+1}$ for all $i\in \mathbb{Z}$. Therefore,  in the sequel, we always take Coeff$(\mathcal{W}(a,b))$ as a Lie algebra with the Lie brackets given by (\ref{ww1}) and (\ref{ww2}).
\begin{rmk}
It should be pointed out that Coeff$(\mathcal{W}(1,0))$ is isomorphic to the twisted Heisenberg-Virasoro Lie algebra without central extensions and Coeff$(\mathcal{W}(2,0))$ is just the $W(2,2)$ without central extensions.
\end{rmk}

\section{Compatible left-symmetric conformal algebraic structures on $\mathcal{W}(a,b)$}
In this section, we plan to give a classification of compatible left-symmetric conformal algebraic structures on $\mathcal{W}(a,b)$. Since there was an assumption that $\bigoplus_{i=-\infty}^{+\infty}\mathbb{C}L_i$ is a left-symmetric subalgebra in \cite{KB,CL1,CL2}, in this paper, we also assume that $\mathbb{C}[\partial]L$ is a left-symmetric conformal subalgebra in $\mathcal{W}(a,b)$.

Therefore, by the hypothesis above, we can assume
\begin{align}
\label{e1}L_\lambda L=f(\lambda,\partial)L,\\
L_\lambda W=g_1(\lambda,\partial)L+g_2(\lambda,\partial)W,\\
W_\lambda L=h_1(\lambda,\partial)L+h_2(\lambda,\partial)W,\\
\label{e2}W_\lambda W=k_1(\lambda,\partial)L+k_2(\lambda,\partial)W,
\end{align}
where $f(\lambda,\partial)$, $g_1(\lambda,\partial)$, $g_2(\lambda,\partial)$, $h_1(\lambda,\partial)$, $h_2(\lambda,\partial)$, $k_1(\lambda,\partial)$ and $k_2(\lambda,\partial)\in \mathbb{C}[\lambda,\partial]$.

According to that $\mathbb{C}[\partial]L$ is a left-symmetric conformal subalgebra, $[L_\lambda L]=L_\lambda L-L_{-\lambda-\partial}L$ and by Example \ref{r1}, we get
$f(\lambda,\partial)=\partial+\lambda+c$, where $c\in \mathbb{C}$. By the identity of left-symmetric conformal algebras and the compatible condition, the algebra defined by (\ref{e1})-(\ref{e2}) is a compatible left-symmetric conformal algebra on $\mathcal{W}(a,b)$ if and only if the following identities hold:
\begin{align}
[L_\lambda W]_{\lambda+\mu}L=L_\lambda(W_\mu L)-W_\mu(L_\lambda L),\\
[L_\lambda L]_{\lambda+\mu}W=L_\lambda(L_\mu W)-L_\mu(L_\lambda W),\\
[L_\lambda W]_{\lambda+\mu}W=L_\lambda(W_\mu W)-W_\mu(L_\lambda W),\\
[W_\lambda W]_{\lambda+\mu}L=W_\lambda(W_\mu L)-W_\mu(W_\lambda L),\\
[W_\lambda W]_{\lambda+\mu}W=W_\lambda(W_\mu W)-W_\mu(W_\lambda W),\\
L_\lambda W-W_{-\lambda-\partial}L=[L_\lambda W],\\
W_\lambda W-W_{-\lambda-\partial}W=[W_\lambda W]=0.
\end{align}
Substituting (\ref{e1})-(\ref{e2}) into these identities,  the algebra defined by (\ref{e1})-(\ref{e2}) is a compatible left-symmetric conformal algebra on $\mathcal{W}(a,b)$ if and only if
 $g_1(\lambda,\partial)$, $g_2(\lambda,\partial)$, $h_1(\lambda,\partial)$, $h_2(\lambda,\partial)$, $k_1(\lambda,\partial)$ and $k_2(\lambda,\partial)$ satisfy the following equations
\begin{align}
&&(-\lambda-\mu+a\lambda+b)h_1(\lambda+\mu,\partial)=h_1(\mu,\lambda+\partial)(\partial+\lambda+c)\nonumber\\
\label{f1}&&+h_2(\mu,\lambda+\partial)g_1(\lambda,\partial)
-(\partial+\mu+\lambda+c)h_1(\mu,\partial),\\
\label{f2}&&(-\lambda-\mu+a\lambda+b)h_2(\lambda+\mu,\partial)=h_2(\mu,\lambda+\partial)g_2(\lambda,\partial)
-(\partial+\mu+\lambda+c)h_2(\mu,\partial),\\
&&(\lambda-\mu)g_1(\lambda+\mu,\partial)=g_1(\mu,\lambda+\partial)(\partial+\lambda+c)+g_2(\mu,\lambda+\partial)g_1(\lambda,\partial)\nonumber\\
\label{f3}&&-g_1(\lambda,\mu+\partial)(\partial+\mu+c)-g_2(\lambda,\mu+\partial)g_1(\mu,\partial),\\
\label{f4}&&(\lambda-\mu)g_2(\lambda+\mu,\partial)=g_2(\mu,\lambda+\partial)g_2(\lambda,\partial)
-g_2(\lambda,\mu+\partial)g_2(\mu,\partial),\\
&&(-\lambda-\mu+a\lambda+b)k_1(\lambda+\mu,\partial)=k_1(\mu,\lambda+\partial)(\partial+\lambda+c)+k_2(\mu,\lambda+\partial)g_1(\lambda,\partial)\nonumber\\
\label{f5}&&-g_1(\lambda,\mu+\partial)h_1(\mu,\partial)-g_2(\lambda,\mu+\partial)k_1(\mu,\partial),\\
&&(-\lambda-\mu+a\lambda+b)k_2(\lambda+\mu,\partial)=k_2(\mu,\lambda+\partial)g_2(\lambda,\partial)
-g_1(\lambda,\mu+\partial)h_2(\mu,\partial)\nonumber\\
\label{f6}&&-g_2(\lambda,\mu+\partial)k_2(\mu,\partial),\end{align}
\begin{align}\label{f7}&&h_1(\mu,\lambda+\partial)h_1(\lambda,\partial)+h_2(\mu,\lambda+\partial)k_1(\lambda,\partial)=h_1(\lambda,\mu+\partial)h_1(\mu,\partial)
+h_2(\lambda,\mu+\partial)k_1(\mu,\partial),\\
\label{f8}&&h_1(\mu,\lambda+\partial)h_2(\lambda,\partial)+h_2(\mu,\lambda+\partial)k_2(\lambda,\partial)=h_1(\lambda,\mu+\partial)h_2(\mu,\partial)
+h_2(\lambda,\mu+\partial)k_2(\mu,\partial),\\
\label{f9}&&k_1(\mu,\lambda+\partial)h_1(\lambda,\partial)+k_2(\mu,\lambda+\partial)k_1(\lambda,\partial)=k_1(\lambda,\mu+\partial)h_1(\mu,\partial)
+k_2(\lambda,\mu+\partial)k_1(\mu,\partial),\\
\label{f10}&&k_1(\mu,\lambda+\partial)h_2(\lambda,\partial)+k_2(\mu,\lambda+\partial)k_2(\lambda,\partial)=k_1(\lambda,\mu+\partial)h_2(\mu,\partial)
+k_2(\lambda,\mu+\partial)k_2(\mu,\partial),\\
\label{f11}&&g_1(\lambda,\partial)-h_1(-\lambda-\partial,\partial)=0,\\
\label{f12}&&g_2(\lambda,\partial)-h_2(-\lambda-\partial,\partial)=\partial+a\lambda+b,\\
\label{f13}&&k_1(\lambda,\partial)=k_1(-\lambda-\partial,\partial),\\
\label{f14}&&k_2(\lambda,\partial)=k_2(-\lambda-\partial,\partial).
\end{align}

\begin{lem}\label{lem1}
For $g_2(\lambda,\partial)$ and $h_2(\lambda,\partial)$, there are the following three cases: \\
(A) $g_2(\lambda,\partial)=\partial+a\lambda+b$, $h_2(\lambda,\partial)=0$;\\
(B) $g_2(\lambda,\partial)=\partial+a\lambda+b+c$, $h_2(\lambda,\partial)=c$, where $c\neq 0$;\\
(C) $g_2(\lambda,\partial)=\partial+(a-1)\lambda+b+c$, $h_2(\lambda,\partial)=\partial+\lambda+c$.
\end{lem}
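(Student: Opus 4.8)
The plan is to isolate the closed subsystem governing $g_2$ and $h_2$. Among \eqref{f1}--\eqref{f14}, only \eqref{f2}, \eqref{f4} and \eqref{f12} involve $g_2,h_2$ without the remaining unknowns, so I would solve \eqref{f4} for $g_2$ alone, read $h_2$ off from the reciprocity \eqref{f12}, and then impose \eqref{f2} to cut the free constants down to the three admissible configurations.

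First I would solve \eqref{f4}, a self-contained functional equation for $g:=g_2$. Setting $\mu=0$ and using that $\mathbb{C}[\lambda,\partial]$ is an integral domain gives, whenever $g\not\equiv0$, the relation $\lambda=g(0,\lambda+\partial)-g(0,\partial)$, hence $g(0,\partial)=\partial+c_0$ for some $c_0\in\mathbb{C}$. The decisive step is a degree count in $\partial$: writing $g=\sum_j b_j(\lambda)\partial^j$ with top degree $D$ and $b_D\ne0$, the coefficient of $\partial^{2D}$ on the right-hand side of \eqref{f4} cancels automatically, while the coefficient of $\partial^{2D-1}$ reduces to $D(\lambda-\mu)b_D(\lambda)b_D(\mu)$; since the left-hand side has $\partial$-degree only $D$, any $D\ge2$ forces $b_D\equiv0$, a contradiction, so $D\le1$. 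As $g(0,\partial)=\partial+c_0$ already has $\partial$-degree one, in fact $D=1$, and matching the coefficient of $\partial^1$ yields $b_1(\lambda+\mu)=b_1(\lambda)b_1(\mu)$, whose only polynomial solution with $b_1(0)=1$ is $b_1\equiv1$. Finally, matching the $\partial$-free part reduces to $(\lambda-\mu)q(\lambda+\mu)=\lambda q(\lambda)-\mu q(\mu)$ for $q(\lambda):=g(\lambda,0)-c_0$, which forces $q$ linear. Thus $g_2(\lambda,\partial)=\partial+s\lambda+c_0$ for constants $s,c_0$.

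Next, substituting $g_2=\partial+s\lambda+c_0$ into \eqref{f12} and solving the reciprocity gives $h_2(\lambda,\partial)=(a-s)(\lambda+\partial)+(c_0-b)$. Writing $\alpha:=a-s$ and $\beta:=c_0-b$, I would feed both $g_2$ and $h_2$ into \eqref{f2}; after expanding and collecting by monomials in $\lambda,\mu,\partial$, the identity collapses to
\[
\alpha(1-\alpha)\,(\lambda^2+\lambda\mu+\lambda\partial)+\alpha(\beta-c)\,(\mu+\partial)+\beta(\beta-c)=0,
\]
equivalently to the three scalar conditions $\alpha(1-\alpha)=0$, $\alpha(\beta-c)=0$ and $\beta(\beta-c)=0$. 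Solving this system gives precisely $(\alpha,\beta)\in\{(0,0),(0,c),(1,c)\}$, which translate back through $s=a-\alpha$ and $c_0=b+\beta$ into cases (A), (B) and (C) respectively, the constraint $c\ne0$ being exactly what separates (B) from (A).

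It remains to discard the degenerate branch $g_2\equiv0$ set aside above: \eqref{f12} then forces $h_2(\lambda,\partial)=(a-1)\partial+a\lambda-b$, and putting this and $g_2=0$ into \eqref{f2} and setting $\lambda=0$ gives $(\partial+b+c)\,h_2(\mu,\partial)=0$, an identity in $\mathbb{C}[\mu,\partial]$; since $\partial+b+c\ne0$ this would force $h_2\equiv0$, impossible because the $\partial$- and $\mu$-coefficients $a-1$ and $a$ of $h_2$ cannot vanish simultaneously. Hence $g_2\ne0$ and the three cases are exhaustive. I expect the degree analysis of \eqref{f4} to be the main obstacle — ruling out all higher-degree solutions in $\partial$ and $\lambda$ — whereas the subsequent reductions through \eqref{f12} and \eqref{f2} are finite and essentially mechanical.
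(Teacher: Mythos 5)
Your proposal is correct and follows essentially the same route as the paper: solve \eqref{f4} by a degree bound in $\partial$ to get $g_2=\partial+s\lambda+c_0$ (or $g_2=0$, which is excluded via \eqref{f12} and \eqref{f2}), recover $h_2$ from \eqref{f12}, and then impose \eqref{f2} to reduce to the three cases. Your expansions (the $\partial^{2D-1}$-coefficient argument and the residual $\alpha(1-\alpha)(\lambda^2+\lambda\mu+\lambda\partial)+\alpha(\beta-c)(\mu+\partial)+\beta(\beta-c)$) check out and merely make explicit what the paper asserts by comparing selected coefficients.
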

\begin{proof}
First, let us determine $g_2(\lambda,\partial)$. By comparing the degree of $\lambda$ in (\ref{f4}),
we can obtain that the degree of $\partial$ in $g_2(\lambda,\partial)$ is smaller than $2$. Therefore, assume that $g_2(\lambda,\partial)=g_{21}(\lambda)\partial+g_{22}(\lambda)$, where $g_{21}(\lambda)$, $g_{22}(\lambda)\in\mathbb{C}[\lambda]$. Taking it into (\ref{f4}), one can get
\begin{align}
\label{f15}(\lambda-\mu)(g_{21}(\lambda+\mu)\partial+g_{22}(\lambda+\mu))=g_{21}(\lambda)g_{21}(\mu)(\lambda-\mu)\partial+g_{21}(\mu)g_{22}(\lambda)\lambda-g_{21}(\lambda)g_{22}(\mu)\mu.
\end{align}
So, we have $g_{21}(\lambda+\mu)=g_{21}(\lambda)g_{21}(\mu)$. Therefore, $g_{21}(\lambda)=0$ or $g_{21}(\lambda)=1$.
When $g_{21}(\lambda)=0$, by (\ref{f15}), we can obtain $g_{22}(\lambda)=0$. When $g_{21}(\lambda)=1$, (\ref{f15}) can be simplified
into
\begin{align}
\label{f16}(\lambda-\mu)g_{22}(\lambda+\mu)=g_{22}(\lambda)\lambda-g_{22}(\mu)\mu. \end{align}
It is obvious that the degree of $g_{22}(\lambda)$ is smaller than $2$. Therefore we can assume $g_{22}(\lambda)=f+e\lambda$, where $e$, $f\in \mathbb{C}$.
Then (\ref{f16}) naturally holds. By the above discussion, we can get that $g_2(\lambda,\partial)=0$ or $g_2(\lambda,\partial)=\partial+e\lambda+f$. When $g_2(\lambda,\partial)=0$, by (\ref{f12}), we have $h_2(\lambda,\partial)=(a-1)\partial+a\lambda-b$. Then according to (\ref{f2}), we get $(-\lambda-\mu+a\lambda+b)((a-1)\partial+a\lambda+a\mu-b)=
-(\partial+\mu+\lambda+c)((a-1)\partial+a\mu-b)$. By comparing the degree of $\partial$ in the above equality,
one can know that this case does not hold. Therefore, we have $g_2(\lambda,\partial)=\partial+e\lambda+f$.

By (\ref{f12}), $h_2(\lambda,\partial)=(a-e)(\lambda+\partial)+f-b$. Putting it into (\ref{f2}), one can get
\begin{align}
&&(\partial+(e-a+1)\lambda+\mu+f-b)((a-e)(\partial+\lambda+\mu)+f-b)\nonumber\\
\label{f17}&&=(\partial+\mu+\lambda+c)((a-e)(\partial+\mu)+f-b).
\end{align}
By comparing the coefficients of $\lambda\partial$ in (\ref{f17}), we obtain $(e-a+1)(a-e)=0$. Therefore, $e=a$ or $e=a-1$.
When $e=a$, by (\ref{f17}), one can have $f=b$ or $f=b+c$; when $e=a-1$, it can be directly obtained from (\ref{f17}) that
$f=b+c$. Therefore, we get this lemma.\end{proof}

Next, we only need to discuss the three cases in Lemma \ref{lem1}.

\begin{lem}\label{lem2}
In Case (A) in Lemma \ref{lem1}, we can get the following cases:\\
(A1) $g_2(\lambda,\partial)=\partial+a\lambda+b$, $h_2(\lambda,\partial)=h_1(\lambda,\partial)=g_1(\lambda,\partial)=k_1(\lambda,\partial)=k_2(\lambda,\partial)=0$;\\
(A2) When $a=1$ and $c=2b$, $g_2(\lambda,\partial)=\partial+\lambda+b$, $k_1(\lambda,\partial)=k_1$, $h_2(\lambda,\partial)=h_1(\lambda,\partial)=g_1(\lambda,\partial)=k_2(\lambda,\partial)=0$, where $k_1\in \mathbb{C}\backslash\{0\}$;\\
(A3) When $a=1$ and $b=0$, $g_2(\lambda,\partial)=\partial+\lambda$, $k_2(\lambda,\partial)=k_2$, $h_2(\lambda,\partial)=h_1(\lambda,\partial)=g_1(\lambda,\partial)=k_1(\lambda,\partial)=0$,
where $k_2\neq 0$;\\
(A4) When $a=1$ and $c=b=0$, $g_2(\lambda,\partial)=\partial+\lambda$, $k_1(\lambda,\partial)=k_1$, $k_2(\lambda,\partial)=k_2$, $h_2(\lambda,\partial)=h_1(\lambda,\partial)=g_1(\lambda,\partial)=0$, where $k_1$, $k_2\in \mathbb{C}\backslash\{0\}$;\\
(A5) When $a=1$ and $b=0$, $g_2(\lambda,\partial)=\partial+\lambda$, $h_1(\lambda,\partial)=g_1(\lambda,\partial)=h_1$, $k_2(\lambda,\partial)=k_2$, $h_2(\lambda,\partial)=0$, $k_1(\lambda,\partial)=\frac{h_1(h_1-k_2)}{c}$, where $h_1$, $c\neq 0$, $k_2\in \mathbb{C}$;\\
(A6) When $a=1$ and $b=0$, $g_2(\lambda,\partial)=\partial+\lambda$, $h_1(\lambda,\partial)=g_1(\lambda,\partial)=h_1$, $k_2(\lambda,\partial)=h_1$, $h_2(\lambda,\partial)=0$, $k_1(\lambda,\partial)=k_1$, where $h_1$, $k_1\neq 0$, $c=0$.
\end{lem}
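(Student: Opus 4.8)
The plan is to substitute the Case (A) values $g_2(\lambda,\partial)=\partial+a\lambda+b$ and $h_2(\lambda,\partial)=0$ into the system (\ref{f1})--(\ref{f14}) and to solve for the remaining unknowns $h_1$, $g_1$, $k_1$, $k_2$ in that order. First note that (\ref{f11}) already forces $g_1(\lambda,\partial)=h_1(-\lambda-\partial,\partial)$, so $g_1$ is determined once $h_1$ is known; moreover, with $h_2=0$ the equations (\ref{f2}) and (\ref{f8}) become trivial, while (\ref{f12}) and (\ref{f14}) hold automatically. Under the same substitution, equation (\ref{f1}) reduces to $((a-1)\lambda-\mu+b)h_1(\lambda+\mu,\partial)=h_1(\mu,\lambda+\partial)(\partial+\lambda+c)-(\partial+\lambda+\mu+c)h_1(\mu,\partial)$ and the relation (\ref{f7}) reduces to the quadratic identity $h_1(\mu,\lambda+\partial)h_1(\lambda,\partial)=h_1(\lambda,\mu+\partial)h_1(\mu,\partial)$.

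First I would determine $h_1$. Comparing the coefficient of the top power of $\partial$ in the reduced (\ref{f1}) yields, for the leading coefficient $c_e$ (where $e:=\deg_\partial h_1$), a relation of the form $((a-1)\lambda-\mu+b)c_e(\lambda+\mu)=c_e(\mu)(e\lambda-\mu)$; setting $\mu=0$ and imposing polynomiality bounds $e$ and shows $c_e$ is essentially constant. The crucial point is that (\ref{f1}) \emph{alone} still admits spurious higher-$\partial$-degree solutions, which can occur when $b=0$ and $a\in\{2,3,\dots\}$; these are eliminated by the quadratic constraint (\ref{f7}), which forces $h_1$ to be a constant. Feeding a constant $h_1=\gamma$ back into (\ref{f1}) gives $\gamma((a-1)\lambda+b)=0$, so either $\gamma=0$, or $\gamma\neq0$ together with $a=1$ and $b=0$; in the latter case (\ref{f11}) gives $g_1=h_1=\gamma$, and the consistency equation (\ref{f3}) independently reconfirms $a=1$. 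This yields the dichotomy $h_1=g_1=0$ versus $h_1=g_1$ a nonzero constant under $a=1$, $b=0$.

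Next I would treat $k_2$ by the same mechanism: with $h_2=0$, equation (\ref{f10}) is the quadratic identity $k_2(\mu,\lambda+\partial)k_2(\lambda,\partial)=k_2(\lambda,\mu+\partial)k_2(\mu,\partial)$, which again forces $k_2$ to be constant, and (\ref{f6}) with $k_2$ constant collapses to $k_2((a-1)\lambda+b)=0$, giving $k_2=0$ or else $k_2\neq0$ with $a=1$, $b=0$. Finally, with $h_1$, $g_1$, $k_2$ all constant, I would solve for $k_1$ from (\ref{f5}), (\ref{f9}) and the symmetry (\ref{f13}): a degree count in (\ref{f5}), together with (\ref{f9}) and (\ref{f13}), forces $k_1$ to be constant as well, after which (\ref{f5}) collapses to the single scalar identity $k_1(c-2b)=h_1(h_1-k_2)$, with $a=1$ forced whenever $k_1\neq0$. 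Splitting this identity according to which of $h_1$, $k_2$, $k_1$ vanish---recalling that $h_1\neq0$ or $k_2\neq0$ already forces $b=0$, whereas the branch $h_1=k_2=0$, $k_1\neq0$ only requires $c=2b$---yields exactly the six possibilities: the nonzero-$h_1$ branch gives $k_1=\frac{h_1(h_1-k_2)}{c}$ when $c\neq0$ (Case (A5)) and degenerates to $k_2=h_1$ with $k_1$ a free nonzero constant when $c=0$ (Case (A6)), while the $h_1=0$ branch produces (A1)--(A4).

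I expect the main obstacle to be the step forcing $h_1$ and $k_2$ to be constants: the linear equations (\ref{f1}) and (\ref{f6}) do not by themselves rule out higher-$\partial$-degree solutions, so one must carefully combine them with the quadratic relations (\ref{f7}) and (\ref{f10}) to collapse the degree. The remaining difficulty is bookkeeping---tracking the parameter constraints $a=1$, $b=0$, $c=2b$, $c=0$ and the scalar relation among $h_1$, $k_1$, $k_2$ so that the solution set is partitioned into precisely the cases (A1)--(A6) with no omissions or overlaps.
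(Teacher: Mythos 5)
Your overall strategy coincides with the paper's: use the quadratic identity (\ref{f7}) together with (\ref{f1}) to reduce $h_1$ (hence $g_1$ via (\ref{f11})) to a constant that can be nonzero only when $a=1$, $b=0$; do the analogous reduction for $k_2$; then extract $k_1$ from (\ref{f5}), (\ref{f9}), (\ref{f13}), ending with the scalar relation $k_1(c-2b)=h_1(h_1-k_2)$ and the partition into (A1)--(A6), all of which matches the paper. However, two of your intermediate claims are not correct as stated. First, (\ref{f14}) does not ``hold automatically'': it is the symmetry constraint $k_2(\lambda,\partial)=k_2(-\lambda-\partial,\partial)$ on the unknown $k_2$, and the paper actually uses it to pass from $k_2=k_2(\lambda)$ to $k_2$ constant. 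Second, the quadratic identities (\ref{f7}) and (\ref{f10}) do not by themselves force constancy: $h_1(\lambda,\partial)=\lambda$ satisfies (\ref{f7}) and $k_2(\lambda,\partial)=\lambda$ satisfies (\ref{f10}). What they give is $\deg_\partial h_1=\deg_\partial k_2=0$; you then need the reduced (\ref{f1}), namely $((a-1)\lambda-\mu+b)h_1(\lambda+\mu)=-\mu h_1(\mu)$, and the analogous reduction of (\ref{f6}) (or else (\ref{f14})) to reach constancy. Since your plan does invoke (\ref{f1}) and (\ref{f6}) afterwards this is repairable, but the wording inverts which equation does the work.

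The more serious under-specification is the step ``a degree count in (\ref{f5}), together with (\ref{f9}) and (\ref{f13}), forces $k_1$ to be constant.'' In the branch $h_1=k_2=0$ with $c=2b$ --- exactly where case (A2) lives --- equation (\ref{f9}) is vacuous, and (\ref{f5}) reduces to
$((a-1)\lambda-\mu+b)k_1(\lambda+\mu,\partial)=(\partial+\lambda+c)k_1(\mu,\lambda+\partial)-(\partial+\mu+a\lambda+b)k_1(\mu,\partial)$,
in which the top total-degree contributions balance on both sides, so no immediate degree count bounds $\deg k_1$. The paper must set $\partial=0$ to express $k_1(\lambda,\partial)$ through the one-variable polynomial $k_1(\cdot,0)$, substitute back into (\ref{pn3}) to get (\ref{ge6}), and compare the coefficients of $\lambda^{m+1}\partial$ and $\lambda^{2}\partial^{m}$ (its equations (\ref{mq1})--(\ref{mq2})) to force $\deg k_1(\cdot,0)\le 1$, before (\ref{f13}) and a divisibility argument finish the job. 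This is the longest computation in the proof, and your plan would stall there without it; you identified the constancy of $h_1$ and $k_2$ as the main obstacle, but the real one is $k_1$. The final scalar identity and the resulting case split are otherwise correct.
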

\begin{proof}
According to Case (A), $g_2(\lambda,\partial)=\partial+a\lambda+b$, $h_2(\lambda,\partial)=0$. By (\ref{f7}), one can get $h_1(\mu,\lambda+\partial)h_1(\lambda,\partial)=h_1(\lambda,\mu+\partial)h_1(\mu,\partial)$. By comparing the degree of $\lambda$ in the above equality, one can know that the degree of $\partial$ in $h_1(\lambda,\partial)$ is
$0$. Therefore, assume $h_1(\lambda,\partial)=h_1(\lambda)$, where $h_1(\lambda)\in \mathbb{C}[\lambda]$. Then
$g_1(\lambda,\partial)=h_1(-\lambda-\partial)$ by (\ref{f11}). Substituting it into (\ref{f1}), one can have
\begin{align}
\label{n1}((a-1)\lambda-\mu+b)h_1(\lambda+\mu)=-\mu h_1(\mu).
\end{align}
Consequently, when $a\neq 1$ or $b\neq 0$, $h_1(\lambda)=0$, i.e. $h_1(\lambda,\partial)=g_1(\lambda,\partial)=0$;
when  $a=1$ and $b=0$, $h_1(\mu)=h_1$, where $h_1\in \mathbb{C}$, i.e. $h_1(\lambda,\partial)=g_1(\lambda,\partial)=h_1$.
On the other hand, similarly, by (\ref{f10}), we can assume that $k_2(\lambda,\partial)=k_2(\lambda)$, where $k_2(\lambda)\in \mathbb{C}[\lambda]$. By (\ref{f14}), $k_2(\lambda)=k_2(-\lambda-\partial)$. Therefore, $k_2(\lambda)=k_2$, where $k_2\in \mathbb{C}$. Then by (\ref{f6}), we can get $((a-1)\lambda+b)k_2=0$. As a result, when $a\neq 1$ or $b\neq 0$, $k_2=0$,
i.e. $k_2(\lambda,\partial)=0$; when $a=1$ and $b=0$, $k_2(\lambda,\partial)=k_2$. By (\ref{f5}) and (\ref{f9}),
one can get
\begin{align}
\label{pn1}((a-1)\lambda-\mu+b)k_1(\lambda+\mu,\partial)=(\partial+\lambda+c)k_1(\mu,\lambda+\partial)+h_1(k_2-h_1)-(\partial+\mu+a\lambda+b)k_1(\mu,\partial),\\
\label{pn2}h_1(k_1(\mu,\lambda+\partial)-k_1(\lambda,\mu+\partial))=k_2(k_1(\mu,\partial)-k_1(\lambda,\partial)),
\end{align}
where $h_1=k_2=0$ if $a\neq 1$ or $b\neq 0$. Then we discuss (\ref{pn1}) and (\ref{pn2}) in three subcases.

{\bf Subcase a1}: $h_1=0$, $k_2=0$, i.e. $h_1(\lambda,\partial)=k_2(\lambda,\partial)=0$. By (\ref{pn1}),
we have
\begin{align}
\label{pn3}((a-1)\lambda-\mu+b)k_1(\lambda+\mu,\partial)=(\partial+\lambda+c)k_1(\mu,\lambda+\partial)-(\partial+\mu+a\lambda+b)k_1(\mu,\partial).
\end{align}
Setting $\lambda=0$ in (\ref{pn3}), one can obtain $(c-2b)k_1(\mu,\partial)=0$. Therefore, when $c\neq 2b$, $k_1(\mu,\partial)=0$. When $c=2b$, setting $\partial=0$ in (\ref{pn3}), we can get $k_1(\mu,\lambda)=\frac{((a-1)\lambda-\mu+b)k_1(\lambda+\mu,0)+(\mu+a\lambda+b)k_1(\mu,0)}{\lambda+c}$.
Assume $k_1(\mu,0)=k_1(\mu)$, where $k_1(\mu)\in \mathbb{C}[\mu]$. Then
$k_1(\lambda,\partial)=\frac{((a-1)\partial-\lambda+b)k_1(\lambda+\partial)+(\lambda+a\partial+b)k_1(\lambda)}{\partial+2b}$.
Taking it into
(\ref{pn3}), we have
\begin{align}
\label{ge6}(((a-1)\lambda-\mu+b)((a-1)\partial-\lambda-\mu+b)-(\partial+2b)((a-1)(\lambda+\partial)-\mu+b))k_1(\lambda+\mu+\partial)\nonumber\\
+(\partial+\mu+a\lambda+b)((a-1)\partial-\mu+b)k_1(\mu+\partial)\nonumber\\
=((a-a^2)\lambda\partial-a\lambda\mu-a\mu\partial-\mu^2+ab\lambda+ab\partial+b^2)k_1(\mu)\nonumber\\
-((a-1)\lambda-\mu+b)(a\partial+\lambda+\mu+b)k_1(\lambda+\mu).
\end{align}
If $a=1$, (\ref{ge6}) can be simplified into $k_1(\lambda+\mu+\partial)-k_1(\mu+\partial)=k_1(\lambda+\mu)-k_1(\mu)$.
Therefore, we can assume $k_1(\lambda)=e_0+e_1\lambda$, i.e $k_1(\lambda,\partial)=\frac{2e_0b+2be_1\lambda+(be_1+e_0)\partial}{\partial+2b}$, where $e_0$, $e_1\in \mathbb{C}$. Since $\partial+2b$ can divide $2e_0b+2be_1\lambda+(be_1+e_0)\partial$, we have $b=0$ or $e_1=0$. In both two cases, we all have $k_1(\lambda,\partial)=e_0$.
If $a\neq 1$, assume that $k_1(\lambda)=\sum_{i=0}^me_i\lambda^i$, where $e_m\neq 0$. If $m\geq 2$, by comparing the coefficients of $\lambda^{m+1}\partial$ and $\lambda^2\partial^m$ in (\ref{ge6}), we obtain
\begin{align}
\label{mq1}(a-1)(2a-2-m)=0,\\
\label{mq2}(a-1)(am-2m-1-\frac{m(m-1)}{2})=0.
\end{align}
By (\ref{mq1}), $a=1+\frac{m}{2}$. Putting it into (\ref{mq2}), we get $m=-2$. It contradicts to $m\geq 2$.
So, $m\leq 1$. Then we can set $k_1(\lambda)=e_0+e_1\lambda$, where $e_0$, $e_1\in \mathbb{C}$. According to $k_1(\lambda)=k_1(-\lambda)$, one can have
$e_1=0$. Therefore,  we get $k_1(\lambda,\partial)=\frac{e_0((2a-1)\partial+2b)}{\partial+2b}$.
Since $\partial+2b$ can divide $e_0((2a-1)\partial+2b)$, $e_0=0$ or $b=0$. When $e_0=0$, $k_1(\lambda,\partial)=0$.
When $b=0$, $k_1(\lambda,\partial)=e_0(2a-1)$. Taking it into (\ref{pn3}), one can get $2(a-1)(2a-1)e_0\lambda=0$.
Since $a\neq 1$, $(2a-1)e_0=0$, i.e. $k_1(\lambda,\partial)=0$. Therefore, according to the above discussion,
in this case, we have Case (A1) and Case (A2) in this lemma. It can be verified that in both two cases, (\ref{f1})-(\ref{f14}) hold.

{\bf Subcase a2}: $h_1=0$, $k_2\neq0$, i.e. $h_1(\lambda,\partial)=0$ and $k_2(\lambda,\partial)=k_2\neq0$. By
the above discussion, in this case, $a=1$ and $b=0$. By (\ref{pn2}), $k_1(\mu,\partial)=k_1(\lambda,\partial)$.
Therefore, $k_1(\lambda,\partial)=k_1(\partial)$, where $k_1(\partial)\in \mathbb{C}[\partial]$. Substituting it into (\ref{pn1}),
one can obtain
$(\partial+\lambda)k_1(\partial)=(\partial+\lambda+c)k_1(\lambda+\partial)$. Consequently,
when $c\neq 0$, $k_1(\lambda,\partial)=k_1(\partial)=0$; when $c=0$, $k_1(\lambda,\partial)=k_1$.
Therefore, according to the above discussion, in this case, we have Case (A3) and Case (A4).  It can be verified that in both two cases, (\ref{f1})-(\ref{f14}) hold.

{\bf Subcase a3}: $h_1\neq 0$. In this case, $a=1$, $b=0$. According to (\ref{pn2}), we get $k_1(\mu,\lambda+\partial)-k_1(\lambda,\mu+\partial)
=\frac{k_2}{h_1}(k_1(\mu,\partial)-k_1(\lambda,\partial))$. Taking $\mu=0$ in the above equality and setting $k_1(0,\partial)=k_1(\partial)$, one can obtain
\begin{align}
\label{pn4}(k_2-h_1)k_1(\lambda,\partial)=k_2k_1(\partial)-h_1k_1(\lambda+\partial).
\end{align}
It can be obtained from (\ref{pn4}) that when $k_2\neq h_1$, $k_1(\lambda,\partial)=\frac{k_2k_1(\partial)-h_1k_1(\lambda+\partial)}{k_2-h_1}$.
By $k_1(\lambda,\partial)=k_1(-\lambda-\partial,\partial)$, we have $k_1(\lambda+\partial)=k_1(-\lambda)$, i.e. $k_1(\lambda)=k_1$, where $k_1\in \mathbb{C}$. Therefore, when
$k_2\neq h_1$, $k_1(\lambda,\partial)=k_1$. Substituting it into (\ref{pn1}), we can obtain $ck_1=h_1(h_1-k_2)$.
Consequently, when $k_2\neq h_1$ and $c\neq 0$, $k_1(\lambda,\partial)=\frac{h_1(h_1-k_2)}{c}$. When
$k_2=h_1$, by (\ref{pn1}) and according to the computation in Subcase a1, we get $k_1(\lambda,\partial)=k_1$.
Taking it into (\ref{pn1}), we have $ck_1=0$. Therefore, when $c\neq 0$, $k_1(\lambda,\partial)=0$; when
$c=0$, $k_1(\lambda,\partial)=k_1$. Therefore, according to the above discussion, in this case, we have Case (A5) and Case (A6). It can be verified that in both two cases, (\ref{f1})-(\ref{f14}) hold.

This proof is completed.
\end{proof}
\begin{lem}\label{lem3}
In Case (B) in Lemma \ref{lem1}, when $b\neq 0$, we have the following results:\\
(B1) $h_1(\lambda,\partial)=g_1(\lambda,\partial)=k_1(\lambda,\partial)=k_2(\lambda,\partial)=0$, $g_2(\lambda,\partial)=\partial+a\lambda+b+c$ and $h_2(\lambda,\partial)=c$, where $c\neq0$;\\
(B2) When $a=1$ and $c=b$, $g_2(\lambda,\partial)=\partial+\lambda+2b$, $h_2(\lambda,\partial)=b$, $g_1(\lambda,\partial)=h_1(\lambda,\partial)=d$,
$k_1(\lambda,\partial)=-\frac{d^2}{b}$ and $k_2(\lambda,\partial)=-d$, where $d\in \mathbb{C}\backslash \{0\}$.
\end{lem}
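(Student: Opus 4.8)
The plan is to exploit the standing hypotheses of Case (B), namely $g_2(\lambda,\partial)=\partial+a\lambda+b+c$ and $h_2(\lambda,\partial)=c$ with $c\neq 0$, together with the assumption $b\neq 0$, to pin down $h_1$, $g_1$, $k_1$, $k_2$ one after another. First I would record that with these values equations (\ref{f2}) and (\ref{f12}) hold identically, so no information is lost there, and that (\ref{f11}) lets me eliminate $g_1$ via $g_1(\lambda,\partial)=h_1(-\lambda-\partial,\partial)$. The crucial opening move is to substitute these into (\ref{f1}) and then set $\lambda=0$: using $h_2=c$ the right-hand side collapses and, after cancellation, one is left with $b\,h_1(\mu,\partial)=c\,h_1(-\partial,\partial)$. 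Since $b\neq 0$ the right-hand side is independent of $\mu$, so $h_1$ does not depend on its first argument; writing $h_1(\mu,\partial)=h_1(\partial)$ the same identity becomes $(1-\tfrac{c}{b})h_1(\partial)=0$, giving the clean dichotomy $h_1=0$ or $c=b$.

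In the first branch, $h_1=0$ (hence $g_1=0$), I expect to reach (B1). Here (\ref{f8}) reduces to $k_2(\lambda,\partial)=k_2(\mu,\partial)$, so $k_2$ is independent of its first variable; feeding $k_2=k_2(\partial)$ into (\ref{f6}) and setting $\lambda=0$ produces $b\,k_2(\partial)=0$, whence $k_2=0$ because $b\neq 0$ (one first notes the coefficient of $\mu$ forces $a=1$ or $k_2=0$, and the $\lambda=0$ specialization handles the remaining case). Symmetrically, (\ref{f7}) gives $c\,k_1(\lambda,\partial)=c\,k_1(\mu,\partial)$, so $k_1=k_1(\partial)$, and (\ref{f5}) with $g_1=h_1=k_2=0$, specialized at $\lambda=0$, yields $2b\,k_1(\partial)=0$, so $k_1=0$. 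This is exactly (B1), and (\ref{f13})--(\ref{f14}) are automatic since $k_1,k_2$ are independent of the first variable.

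In the second branch, $c=b$ with $h_1\neq 0$, I would first return to the full identity (\ref{f1}) with $h_1$ independent of its first variable; after collecting terms it simplifies to $(\partial+a\lambda+b)h_1(\partial)=(\partial+\lambda+b)h_1(\lambda+\partial)$. Setting $\partial=0$ gives $(a\lambda+b)h_1(0)=(\lambda+b)h_1(\lambda)$; for $h_1\neq 0$ this forces $h_1(0)\neq 0$ and hence $(\lambda+b)\mid(a\lambda+b)$, and since $b\neq 0$ this gives $a=1$, after which the identity collapses to $h_1(\partial)=h_1(\lambda+\partial)$, i.e. $h_1=d$ is a nonzero constant and $g_1=d$ as well. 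The remaining unknowns follow the same pattern: (\ref{f8}) makes $k_2$ independent of the first variable, (\ref{f6}) at $\lambda=0$ gives $b\,k_2(\partial)=-bd$ so $k_2=-d$; (\ref{f7}) makes $k_1$ independent of the first variable, and (\ref{f5}) at $\lambda=0$ gives $2b\,k_1(\partial)=-2d^2$ so $k_1=-\tfrac{d^2}{b}$. This is precisely (B2), with $d\neq 0$ distinguishing it from (B1).

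The main obstacle I anticipate is bookkeeping rather than conceptual: the recurring device is the substitution $\lambda=0$ followed by division by $b$, which is why the hypothesis $b\neq 0$ is indispensable throughout, and one must be careful in the $k_2$-step to rule out the $\mu$-coefficient before specializing. The one genuinely delicate point is the divisibility argument in the second branch that forces $a=1$; everything else is linear algebra in $\mathbb{C}[\lambda,\partial]$. To finish, I would verify that the remaining equations (\ref{f3}), (\ref{f9}) and (\ref{f10}) are satisfied by the two resulting solution families, which is a direct substitution since all the surviving functions are zero (in (B1)) or constants (in (B2)).
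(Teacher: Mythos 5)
Your proposal is correct and follows essentially the same route as the paper: setting $\lambda=0$ in (\ref{f1}) to get $b\,h_1(\mu,\partial)=c\,h_1(-\partial,\partial)$, reducing to $(\partial+a\lambda+b)h(\partial)=(\partial+\lambda+c)h(\lambda+\partial)$ to force $h_1$ constant with the dichotomy $h_1=0$ versus $a=1,\ c=b$, and then using (\ref{f8})/(\ref{f7}) followed by the $\lambda=0$ specializations of (\ref{f6})/(\ref{f5}) to pin down $k_2$ and $k_1$. The only differences are cosmetic (you extract $a=1$ by a divisibility argument at $\partial=0$ where the paper uses a degree comparison, and your parenthetical about the $\mu$-coefficient in the $k_2$ step is unnecessary), so this matches the paper's proof.
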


\begin{proof}
In Case (B), $g_2(\lambda,\partial)=\partial+a\lambda+b+c$ and $h_2(\lambda,\partial)=c$, where $c\neq 0$.
According to (\ref{f11}), (\ref{f1}) becomes
\begin{align}
\label{f18}((a-1)\lambda-\mu+b)h_1(\lambda+\mu,\partial)=h_1(\mu,\lambda+\partial)(\partial+\lambda+c)+ch_1(-\lambda-\partial,\partial)
-(\partial+\lambda+\mu+c)h_1(\mu,\partial).
\end{align}
Setting $\lambda=0$ in (\ref{f18}), we can obtain $bh_1(\mu,\partial)=ch_1(-\partial,\partial)$.
Since $b\neq 0$, $h_1(\mu,\partial)=\frac{c}{b}h_1(-\partial,\partial)$.
Therefore,  when $b\neq 0$, we can assume that $h_1(\mu,\partial)=h(\partial)$, where
$h(\partial)\in \mathbb{C}[\partial]$. Taking it into (\ref{f18}), one can obtain \begin{align}
\label{f19}(\partial+a\lambda+b)h(\partial)=(\partial+\lambda+c)h(\lambda+\partial).
\end{align}
Therefore, the degree of $\partial$ in $h(\partial)$ is $0$. Then we can set
$h(\partial)=d$, where  $d\in \mathbb{C}$. By (\ref{f19}), one can have $(a-1)d=0$ and $(b-c)d=0$. Consequently, $h(\partial)=0$ or if $a=1$ and $c=b$, $h(\partial)=d$.
Thus, there are two cases: when $b\neq 0$,  $h_1(\lambda,\partial)=0$; when $b\neq 0$, $a= 1$ and $c=b$,
$h_1(\lambda,\partial)=d$, where $d\in \mathbb{C}\backslash \{0\}$.

Next, we consider the case when $b\neq 0$ in the above two cases.

{\bf Subcase b1}: when $b\neq 0$, $h_1(\lambda,\partial)=0$. Therefore, by
(\ref{f11}), $g_1(\lambda,\partial)=0$. Moreover, setting $\lambda=0$ in
(\ref{f6}), one can get $k_2(\lambda,\partial)=0$. Similarly, letting $\lambda=0$ in
 (\ref{f5}), we obtain $k_1(\lambda,\partial)=0$. Therefore, in this case, we have Case (B1). It can be verified that in this case, (\ref{f1})-(\ref{f14}) hold.

{\bf Subcase b2}: when $b\neq 0$, $a=1$ and $c=b$, $h_1(\lambda,\partial)=d$, where $d\in \mathbb{C}\backslash \{0\}$.
Then by (\ref{f11}), $g_1(\lambda,\partial)=d$. Substituting it into (\ref{f8}), we can get $k_2(\lambda,\partial)=k_2(\mu,\partial)$.
So, the degree of $\lambda$  in $k_2(\lambda,\partial)$ is 0. Therefore, assume that
$k_2(\lambda,\partial)=k_2(\partial)$, where $k_2(\partial)\in \mathbb{C}[\partial]$. Taking it into (\ref{f6}), one can get
\begin{align}
\label{f25}(b-\mu)k_2(\partial)=k_2(\lambda+\partial)(\lambda+\partial+2b)-db-(\lambda+\mu+\partial+2b)k_2(\partial).
\end{align}
By comparing the degree of $\lambda$ in (\ref{f25}), we can obtain that the degree of $\partial$ in $k_2(\partial)$ is 0. Consequently, assume  that $k_2(\partial)=e_0$, where $e_0\in \mathbb{C}$. By (\ref{f25}), we have
$e_0=-d$. Thus, $k_2(\lambda,\partial)=-d$. According to (\ref{f7}), $k_1(\lambda,\partial)=k_1(\mu,\partial)$.
Therefore, set
$k_1(\lambda,\partial)=k_1(\partial)$, where $k_1(\partial)\in \mathbb{C}[\partial]$. Taking it into (\ref{f5}), one can obtain
\begin{align}
\label{f26}(b-\mu)k_1(\partial)=k_1(\lambda+\partial)(\partial+\lambda+b)-2d^2-(\lambda+\mu+\partial+2b)k_1(\partial).
\end{align}
By (\ref{f26}), the degree of $\partial$ in $k_1(\partial)$ is 0. Therefore, set $k_1(\partial)=e_1$, where $e_1\in \mathbb{C}$. Taking it into (\ref{f26}), we can get
$e_1=-\frac{d^2}{b}$. Therefore,  $k_1(\lambda,\partial)=-\frac{d^2}{b}$. Consequently, we have Case (B2).
It can be verified that in this case, (\ref{f1})-(\ref{f14}) hold.

The proof is completed.
\end{proof}
\begin{lem}\label{lem4}
In Case (B) in Lemma \ref{lem1}, when $b=0$, we have the following results:\\
(B3) $h_1(\lambda,\partial)=g_1(\lambda,\partial)=k_1(\lambda,\partial)=k_2(\lambda,\partial)=0$, $h_2(\lambda,\partial)=c$,
and $g_2(\lambda,\partial)=\partial+a\lambda+c$;\\
(B4) When $a= 1$,
$h_1(\lambda,\partial)=g_1(\lambda,\partial)=0$, $h_2(\lambda,\partial)=c$, $k_1(\lambda,\partial)=k_1$, $k_2(\lambda,\partial)=k_2$, and
$g_2(\lambda,\partial)=\partial+\lambda+c$, where $(k_1, k_2)\in \mathbb{C}^2 \setminus\{(0,0\}$.
\end{lem}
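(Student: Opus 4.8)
The plan is to insert the Case (B) data with $b=0$, that is $g_2(\lambda,\partial)=\partial+a\lambda+c$ and $h_2(\lambda,\partial)=c$ with $c\neq 0$, into the system (\ref{f1})--(\ref{f14}) and to solve for the remaining unknowns $g_1,h_1,k_1,k_2$, which are linked by $g_1(\lambda,\partial)=h_1(-\lambda-\partial,\partial)$ (from (\ref{f11})) and by the symmetry relations (\ref{f13}), (\ref{f14}). The decisive first step is to prove $h_1=g_1=0$; once that is available the system collapses and $k_1,k_2$ are determined immediately.

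For the first step I would begin with (\ref{f1}) after using (\ref{f11}). Setting $\lambda=0$ there, and using $h_2=c$, $b=0$, every $\mu$-dependent term cancels and leaves $c\,h_1(-\partial,\partial)=0$, hence $h_1(-\partial,\partial)=0$; equivalently $(\lambda+\partial)\mid h_1$ and $g_1(0,\partial)=0$. A comparison of the leading coefficients in $\mu$ in (\ref{f1}) then shows that either $h_1\equiv 0$ (the case $\deg_\lambda h_1=0$ being absorbed here via $h_1(-\partial,\partial)=0$) or $\deg_\lambda h_1=n\geq 1$ with $a=n+1$ and constant $\lambda$-leading coefficient. The hard part is eliminating this second alternative: (\ref{f1}) (and likewise (\ref{f3})) genuinely admit such spurious solutions -- e.g.\ $h_1(\lambda,\partial)=\phi(\lambda+\partial)$ solves both when $a=2$ -- so the Jacobi-type relations coupling $h_1$ to $k_1,k_2$ must be brought in. Concretely, setting $\mu=0$ in (\ref{f7}) and (\ref{f8}) gives
\[
k_1(\lambda,\partial)=k_1(0,\partial)+\frac1c\,h_1(\lambda,\partial)\bigl(h_1(0,\partial)-h_1(0,\lambda+\partial)\bigr),\qquad k_2(\lambda,\partial)=h_1(\lambda,\partial)-h_1(0,\lambda+\partial)+k_2(0,\partial),
\]
and I would substitute these into (\ref{f10}); the resulting identity, read off against the degree/leading-term information already extracted from (\ref{f1}), is overdetermined and forces a single coefficient to take two incompatible values (in the model case $a=2$, namely $-\phi^2/c$ versus $-\phi^2/(2c)$), so it is consistent only when $h_1\equiv 0$. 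This elimination of the spurious solutions is the main obstacle.

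Once $h_1=g_1=0$, the rest is routine bookkeeping. Equations (\ref{f7}) and (\ref{f8}) collapse to $c\,k_1(\lambda,\partial)=c\,k_1(\mu,\partial)$ and $c\,k_2(\lambda,\partial)=c\,k_2(\mu,\partial)$, so $k_1=k_1(\partial)$ and $k_2=k_2(\partial)$ are independent of the first slot. Feeding these into (\ref{f5}) and (\ref{f6}) and collecting terms gives
\[
\bigl((2a-1)\lambda+\partial+c\bigr)k_1(\partial)=k_1(\lambda+\partial)(\partial+\lambda+c),\qquad \bigl((2a-1)\lambda+\partial+c\bigr)k_2(\partial)=k_2(\lambda+\partial)(\partial+a\lambda+c),
\]
whence a comparison of $\lambda$-degrees (together with one further use of the second relation in the borderline case $a=0$) forces $k_1,k_2$ to be constants, and the coefficient of $\lambda$ then yields $(a-1)k_1=0$ and $(a-1)k_2=0$. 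Thus for $a\neq 1$ one obtains $k_1=k_2=0$, which is Case (B3); for $a=1$ the constants $k_1,k_2$ are free, and discarding the choice $(k_1,k_2)=(0,0)$ (already contained in (B3)) yields Case (B4) with $(k_1,k_2)\in\mathbb{C}^2\setminus\{(0,0)\}$. I would conclude by verifying directly that both families satisfy all of (\ref{f1})--(\ref{f14}).
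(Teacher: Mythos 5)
Your opening reduction is sound: setting $\lambda=0$ in (\ref{f1}) (after inserting (\ref{f11}) and $h_2=c$, $b=0$) does give $c\,h_1(-\partial,\partial)=0$, and your leading-coefficient comparison in $\mu$ correctly shows that a nonzero $h_1$ forces $\deg_\lambda h_1=n\geq 1$, $a=n+1$, with constant leading $\lambda$-coefficient. Your closing bookkeeping (once $h_1=g_1=0$, equations (\ref{f7})--(\ref{f8}) make $k_1,k_2$ independent of $\lambda$, and (\ref{f5})--(\ref{f6}) force them to be constants annihilated by $a-1$) also matches the paper's conclusion. The problem is the middle step, which you yourself identify as "the main obstacle" and then do not actually carry out. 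You verify the contradiction only for the single model solution $h_1(\lambda,\partial)=\phi(\lambda+\partial)$ with $a=2$, and then assert that for every $n\geq 1$ the substitution of the $\mu=0$ specializations of (\ref{f7}), (\ref{f8}) into (\ref{f10}) is "overdetermined" and kills $h_1$. Nothing in your argument bounds $n$, nor controls the lower-order (in $\lambda$) coefficients of $h_1$, which at this stage are arbitrary polynomials in $\partial$ subject only to divisibility by $\lambda+\partial$. So the second alternative is an infinite family of undetermined candidates, and the claimed uniform contradiction from (\ref{f10}) is a conjecture, not a proof.

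The paper's own treatment shows why this cannot be waved away: it first uses $c\neq 0$ to solve (\ref{f1}) at $\partial=0$ for $h_1(\lambda,\partial)$ in terms of the one-variable polynomial $h(\lambda)=h_1(\lambda,0)$, converts (\ref{f1}) and (\ref{f3}) into the functional equations (\ref{f21}) and (\ref{f22}) for $h$, and only then gets a degree bound ($\deg h\leq 3$, with higher degrees forcing $a\in\{1,2,3\}$). Even after that, the surviving subcases are eliminated by \emph{different} equations: the cubic $a=2$ case and the quadratic $a=1$ case die inside (\ref{f21}); the quadratic $a=3$ case survives (\ref{f1}), (\ref{f3}) and (\ref{f8}) and is only killed by two incompatible coefficients in (\ref{f6}); and the linear $a=2$ case (your model case) is killed via (\ref{f5}), not (\ref{f10}). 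So the elimination genuinely requires a prior degree bound plus case-specific arguments drawn from several of (\ref{f5}), (\ref{f6}), (\ref{f21}); a single appeal to (\ref{f10}) with "leading-term information" is not known to suffice and is the gap you would need to fill.
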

\begin{proof}
When $b=0$,  setting $\partial=0$ in (\ref{f18}), we can get
\begin{align}
h(\mu,\lambda)(\lambda+c)=((a-1)\lambda-\mu)h_1(\lambda+\mu,0)-ch_1(-\lambda,0)+(\lambda+\mu+c)h_1(\mu,0).
\end{align}
Thus, $\lambda+c$ can divide $((a-1)\lambda-\mu)h_1(\lambda+\mu,0)-ch_1(-\lambda,0)+(\lambda+\mu+c)h_1(\mu,0)$. Set
$h_1(\mu,0)=h(\mu)$, where $h(\mu)\in\mathbb{C}[\mu]$. Therefore, we get
\begin{align}
\label{f20}h_1(\lambda,\partial)=\frac{((a-1)\partial-\lambda)h(\lambda+\partial)-ch(-\partial)+(\lambda+\partial+c)h(\lambda)}{\partial+c}.
\end{align}
Taking (\ref{f20}) into (\ref{f18}), one can obtain
\begin{align}
&&(((a-1)\lambda-\mu)((a-1)\partial-\lambda-\mu)-(\partial+c)((a-1)(\lambda+\partial)-\mu))h(\lambda+\mu+\partial)\nonumber\\
&&-c(\partial+a\lambda)h(-\partial)+(\partial+\lambda+\mu+c)((a-1)\lambda-\mu)h(\lambda+\mu)+c(\partial+\lambda)h(-\lambda-\partial)\nonumber\\
\label{f21}&&+\mu(\partial+\lambda+\mu+c)h(\mu)-c(a\partial+\lambda)h(-\lambda)+(\partial+\lambda+\mu+c)((a-1)\partial-\mu)h(\mu+\partial)=0.
\end{align}
By (\ref{f11}) and (\ref{f20}),
 \begin{align}
 g_1(\lambda,\partial)=\frac{(a\partial+\lambda)h(-\lambda)-ch(-\partial)+(c-\lambda)h(-\lambda-\partial)}{\partial+c}.
 \end{align}
 Substituting it into (\ref{f3}), we can get
 \begin{align}
 &&-(\lambda-\mu)(\lambda+\mu+\partial)h(-\lambda-\mu-\partial)+(\lambda-\mu)(a\partial+\lambda+\mu)h(-\lambda-\mu)\nonumber\\
 &&+(\lambda\partial+\lambda^2-ac\mu+a\lambda\mu)h(-\lambda-\partial)
 -(\mu\partial+\mu^2-ac\lambda+a\lambda\mu)h(-\mu-\partial)
 +ac(\mu-\lambda)h(-\partial)\nonumber\\
 &&+(\mu^2+a\mu\partial+(a^2-a)\lambda\partial+a\lambda\mu-ac\lambda)h(-\mu)\nonumber\\
\label{f22} &&-(\lambda^2+a\lambda\partial+(a^2-a)\mu\partial+a\lambda\mu-ac\mu)h(-\lambda)=0.
 \end{align}
 Set $h(\lambda)=\sum_{i=0}^mh_i\lambda^i$, where $h_m\neq 0$. If $m\geq 2$, by comparing the coefficients of $\lambda\partial\mu^m$ in (\ref{f22}), we have
 $(-1)^m h_m$ $\cdot(2C_m^2+m-1-am+a-am+(a^2-a))=0$, i.e. $(m-a)^2=1$. Therefore, $a=m+1$ or $a=m-1$. If $m\geq 3$, by comparing the coefficients of
 $\lambda^2\partial\mu^{m-1}$ and $\lambda\partial^2\mu^{m-1}$, one can get
 \begin{align}
 \label{f23}&&3C_m^3+(1-a)C_m^2+(a-2)m=0,\\
 \label{f24}&&3C_m^3+(2-a)C_m^2-m=0.
 \end{align}
 If $a=m-1$, according to (\ref{f23}) and (\ref{f24}), we can get $m=3$ and $a=2$. If $a=m+1$, by (\ref{f24}), we have $-m(\frac{m-1}{2}+1)=0$. It contradicts to $m\geq 3$. Therefore, by the above discussion, when $a=2$, we can assume  that $h(\lambda)=h_0+h_1\lambda+h_2\lambda^2+h_3\lambda^3$, where
 $h_3\neq 0$ and for the general $a$, we can assume $h(\lambda)=h_0+h_1\lambda+h_2\lambda^2$, where if $h_2\neq 0$,  $a=1$ or $a=3$. Then we discuss the following cases.

 {\bf Subcase b3}: when $b=0$ and $a=2$, $h(\lambda)=h_0+h_1\lambda+h_2\lambda^2+h_3\lambda^3$, where
 $h_3\neq 0$.
 Substituting it into (\ref{f21}) and by comparing the coefficients of $\partial\lambda^4$, we can get $h_3=0$.
 It contradicts to $h_3\neq 0$.
 Therefore, this case does not hold.

 {\bf Subcase b4}: when $b=0$ and $a=1$, $h(\lambda)=h_0+h_1\lambda+h_2\lambda^2$, where
 $h_2\neq 0$. Taking it into (\ref{f21}), we can get \begin{align}
2h_2\partial\mu\lambda(\lambda+\partial+\mu+c)+c(\partial+\lambda)(2h_2\lambda\partial-h_0)=0.
\end{align}
 Therefore, $h_2=0$. It contradicts to $h_2\neq 0$.
 Thus, this case does not hold.

{\bf Subcase b5}: when $b=0$ and $a=3$, $h(\lambda)=h_0+h_1\lambda+h_2\lambda^2$, where
 $h_2\neq 0$. Taking it into (\ref{f21}) and by comparing the coefficients of $\lambda\partial^2$, $\lambda\partial$ and $\partial$, we can obtain $h_1=3ch_2$ and $h_0=0$, i.e.
$h(\lambda)=3ch_2\lambda+h_2\lambda^2$.  Substituting it into (\ref{f20}), one can get $h_1(\lambda,\partial)=(2\partial^2+3c\partial+3c\lambda+3\partial\lambda+\lambda^2)h_2$.
Therefore, by (\ref{f11}), we have $g_1(\lambda,\partial)=(\lambda^2-3c\lambda-\lambda\partial)h_2$. Then it is easy to check that (\ref{f3}) holds. Since $c\neq 0$, by comparing the degree of $\mu$ in (\ref{f8}), we can obtain that the degree of $\lambda$ in $k_2(\lambda,\partial)$ is smaller than $3$. In addition, by comparing the degree of $\lambda$ in (\ref{f6}),
it can be directly obtained that the degree of $\partial$ in $k_2(\lambda,\partial)$ is smaller than $3$. Therefore, we can assume that
$k_2(\lambda,\partial)=(d_0+d_1\partial+d_2\partial^2)\lambda^2+(e_0+e_1\partial+e_2\partial^2)\lambda+f_0+f_1\partial+f_2\partial^2$.
Plugging it into (\ref{f8}), one can get
\begin{align}
(d_0+d_1\partial+d_2\partial^2)\mu^2+(e_0+e_1\partial+e_2\partial^2)\mu=(d_0+d_1\partial+d_2\partial^2)\lambda^2+(e_0+e_1\partial+e_2\partial^2)\lambda\nonumber\\
+(\lambda^2+\lambda\partial-\partial\mu-\mu^2)h_2.
\end{align}
Therefore, $d_1=d_2=e_0=e_2=0$ and $d_0=e_1=-h_2$, i.e. $k_2(\lambda,\partial)=-h_2\lambda^2-h_2\partial\lambda+f_0+f_1\partial+f_2\partial^2$. Putting it into (\ref{f6}) and comparing the coefficients of $\lambda^3$ and $\partial\lambda^2$, we can get
\begin{align}
-2h_2=3f_2,\\
-2h_2=7f_2.
\end{align}
Therefore, $h_2=f_2=0$. It contradicts to $h_2\neq0$. Thus, this case does not hold.

{\bf Subcase b6}: when $b=0$, $h(\lambda)=h_0+h_1\lambda$. Taking it into
(\ref{f21}) and comparing the coefficients of all terms, we get
\begin{align}
(a-1)(a-2)h_1=0\\
\label{g1}a(a-1)h_0=0,\\
\label{g2}ach_0=0.
\end{align}
Therefore, in this case, we only need to consider the following cases:
$a=1$, $h_1\neq 0$; $a=2$, $h_1\neq 0$; $h_1=0$.

When $a=1$ and $h_1\neq 0$, by (\ref{g2}), we can get $h_0=0$. Substituting $h(\lambda)=h_1\lambda$ into (\ref{f20}), one can have $h_1(\lambda,\partial)=
\frac{ch_1\partial+ch_1\lambda}{\partial+c}$. Since $\partial+c$ does not divide $ch_1\partial+ch_1\lambda$,
this case does not hold.

When $a=2$ and $h_1\neq 0$, by (\ref{g1}), we can obtain $h_0=0$. Taking $h(\lambda)=h_1\lambda$ into (\ref{f20}), we get $h_1(\lambda,\partial)=(\lambda+\partial)h_1$.  According to
(\ref{f11}), one can have $g_1(\lambda,\partial)=-h_1\lambda$. Then (\ref{f3}) holds. Since $c\neq 0$, by (\ref{f8}), it is shown that the degree of $\lambda$ in $k_2(\lambda,\partial)$ is $0$.
Moreover, by comparing the degree of $\lambda$ in (\ref{f6}), we can get that the degrees of $\lambda$ and $\partial$ are the same. Therefore, we can assume that $k_2(\lambda,\partial)=k_2$, where
$k_2\in \mathbb{C}$. Taking it into
(\ref{f6}), one can obtain
$k_2=h_1c$. Consequently, $k_2(\lambda,\partial)=h_1c$. In addition, it can be obtained from (\ref{f7}) that
\begin{align}
\label{f30} h_1^2(\partial+\lambda+\mu)(\lambda-\mu)=c(k_1(\mu,\partial)-k_1(\lambda,\partial)).
\end{align}
Setting $\mu=0$ in (\ref{f30}), one can get $k_1(\lambda,\partial)=k_1(0,\partial)-\frac{h_1^2}{c}\lambda(\partial+\lambda)$.
Therefore, we can assume that $k_1(\lambda,\partial)=p(\partial)-\frac{h_1^2}{c}\lambda(\partial+\lambda)$, where $p(\partial)\in \mathbb{C}[\partial]$. Taking it into (\ref{f5}), we obtain
\begin{align}
\label{f31}(\partial+3\lambda+c)p(\partial)-(\partial+\lambda+c)p(\lambda+\partial)=\frac{h_1^2}{c}(\lambda^2\partial+\lambda^3)-ch_1^2\lambda
+h_1^2\lambda\partial.
\end{align}
By comparing the coefficients of $\lambda$, one can set $p(\partial)=e_0+e_1\partial+e_2\partial^2$, where $e_0$, $e_1$ and $e_2\in \mathbb{C}$. Plugging it into (\ref{f31}) and according to the coefficients of
$\lambda^3$ and $\lambda^2\partial$, we get that $e_2=-\frac{h_1^2}{c}$ and $3e_2=-\frac{h_1^2}{c}$. Since
$h_1$ and $c$ are not equal to 0, we get a contradiction. Therefore, this case does not hold.

When $h_1=0$, substituting $h(\lambda)=h_0$ into (\ref{f20}), we have $h_1(\lambda,\partial)=
h_0\frac{a\partial}{\partial+c}$. Since $c\neq 0$,  $h_1(\lambda,\partial)=0$.
Therefore, $g_1(\lambda,\partial)=0$. By comparing the degree of $\lambda$ in (\ref{f8}), we can know that the degree of $\lambda$ in $k_2(\lambda,\partial)$ is $0$. Assume that $k_2(\lambda,\partial)=k_2(\partial)$, where $k_2(\partial)\in\mathbb{C}[\partial]$. Taking it into
(\ref{f6}), one can get
\begin{align}
((2a-1)\lambda+\partial+c)k_2(\partial)=(a\lambda+\partial+c)k_2(\lambda+\partial).
\end{align}
Consequently, when $a=1$, $k_2(\lambda,\partial)=k_2$, where $k_2\in \mathbb{C}$, or when $a\neq 1$, $k_2(\lambda,\partial)=0$. By comparing the degree of $\lambda$ in (\ref{f7}), we get that the degree of $\lambda$ in $k_1(\lambda,\partial)$ is $0$.
 Therefore, set $k_1(\lambda,\partial)=k_1(\partial)$, where $k_1(\partial)\in \mathbb{C}[\partial]$. Plugging it into
(\ref{f5}), we get
\begin{align}
((2a-1)\lambda+\partial+c)k_1(\partial)=(\lambda+\partial+c)k_1(\lambda+\partial).
\end{align}
Therefore, when $a=1$, $k_1(\lambda,\partial)=k_1$, where $k_1\in \mathbb{C}$, or when $a\neq 1$, $k_1(\lambda,\partial)=0$. Therefore, we get Case (B3) and Case (B4). It is easy to check that in both two cases, (\ref{f1})-(\ref{f14}) hold.

The proof is finished.
\end{proof}

\begin{lem}\label{lem5}
In Case (C) in Lemma \ref{lem1}, when $b\neq 0$, we have the following result:\\
(C1) $g_2(\lambda,\partial)=\partial+(a-1)\lambda+b+c$, $h_2(\lambda,\partial)=\partial+\lambda+c$, $h_1(\lambda,\partial)=g_1(\lambda,\partial)=k_1(\lambda,\partial)=k_2(\lambda,\partial)=0$.
\end{lem}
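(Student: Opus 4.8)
The plan is to follow the same strategy as in the proof of Lemma \ref{lem3}, first pinning down $h_1$ (and hence $g_1$) and only afterwards reading off $k_1,k_2$. First I would use (\ref{f11}) to replace $g_1(\lambda,\partial)$ by $h_1(-\lambda-\partial,\partial)$ and substitute the Case (C) value $h_2(\mu,\lambda+\partial)=\partial+\lambda+\mu+c$ into (\ref{f1}). Setting $\lambda=0$ in the resulting identity collapses the right-hand side and yields the key relation $b\,h_1(\mu,\partial)=(\partial+\mu+c)\,h_1(-\partial,\partial)$. Since $b\neq0$, this expresses $h_1$ through the single-variable polynomial $\psi(\partial):=h_1(-\partial,\partial)$, namely $h_1(\mu,\partial)=\tfrac1b(\partial+\mu+c)\psi(\partial)$.

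The next step is a consistency check. Evaluating this formula at $\mu=-\partial$ gives $\psi(\partial)=\tfrac cb\psi(\partial)$, so either $c\neq b$, forcing $\psi=0$ and hence $h_1=g_1=0$ outright, or $c=b$. In the latter case I would feed the formula back into the full (unspecialized) identity (\ref{f1}); the common factor $\partial+\lambda+\mu+c$ cancels throughout and leaves $(a\lambda+\partial+b)\psi(\partial)=(\partial+\lambda+b)\psi(\lambda+\partial)$. Comparing $\lambda$-degrees (the right-hand side has $\lambda$-degree $1+\deg\psi$, the left-hand side at most $1$, and the top coefficient on the right is the nonzero leading coefficient of $\psi$) forces $\psi$ to be a constant $d_0$, after which the identity reduces to $(a-1)d_0=0$. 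Thus $h_1=g_1=0$ whenever $a\neq1$.

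The main obstacle is the residual case $a=1$, $c=b$, where a nonzero constant $d_0$ survives everything considered so far; indeed one checks that (\ref{f3}) is then identically satisfied, so it gives no new information. To eliminate $d_0$ I would turn to (\ref{f8}): substituting the now-explicit $h_1$ and $h_2$ and cancelling the common factor $\partial+\lambda+\mu+b$ shows that $k_2(\lambda,\partial)+\tfrac{d_0}{b}\lambda$ is independent of $\lambda$, i.e. $k_2(\lambda,\partial)=-\tfrac{d_0}{b}\lambda+\kappa(\partial)$ for some $\kappa(\partial)\in\mathbb{C}[\partial]$. Imposing the symmetry condition (\ref{f14}) on this expression yields $\tfrac{2d_0}{b}\lambda+\tfrac{d_0}{b}\partial=0$, whence $d_0=0$ since $b\neq0$. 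This closes the last case and establishes $h_1=g_1=0$ in general.

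Finally, with $h_1=g_1=0$ in hand the remaining functions are immediate. Setting $\lambda=0$ in (\ref{f6}) (whose $g_1$-term now drops out and whose $g_2$-terms combine to produce a factor $-\mu$) leaves $b\,k_2(\mu,\partial)=0$, so $k_2=0$; then setting $\lambda=0$ in (\ref{f5}), using $g_1=h_1=k_2=0$, leaves $2b\,k_1(\mu,\partial)=0$, so $k_1=0$. Both conclusions follow because $b\neq0$, giving exactly the values in (C1). A routine verification that these values satisfy all of (\ref{f1})--(\ref{f14}) completes the argument.
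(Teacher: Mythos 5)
Your argument is correct, and its first half is the same as the paper's: you specialize $\lambda=0$ in (\ref{f1}) to get $b\,h_1(\mu,\partial)=(\mu+\partial+c)\,h_1(-\partial,\partial)$, factor $h_1$ accordingly, and reduce to the functional equation $(\partial+a\lambda+b)p(\partial)=(\partial+\lambda+c)p(\lambda+\partial)$, so that a nonzero (constant) $h_1$ can survive only when $a=1$ and $c=b$. (Your extra consistency check at $\mu=-\partial$ splits off $c\neq b$ slightly earlier than the paper does, but this is cosmetic.) Where you genuinely diverge is in the two remaining tasks, and in both places your route is shorter. For the residual case $a=1$, $c=b$, the paper determines $k_2$ completely from (\ref{f8}) and (\ref{f6}) (obtaining $k_2(\lambda,\partial)=-d_0(\lambda+\partial+c)$), then computes $k_1$ from (\ref{f5}), and only afterwards invokes (\ref{f13}) and (\ref{f14}) to force $d_0=0$; you observe that already the partial information $k_2(\lambda,\partial)=-\tfrac{d_0}{b}\lambda+\kappa(\partial)$ coming from (\ref{f8}) is incompatible with the symmetry (\ref{f14}) unless $d_0=0$, which makes the whole (\ref{f6})/(\ref{f5}) computation unnecessary. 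Likewise, once $h_1=g_1=0$, the paper first bounds the degrees of $k_1$ and $k_2$ via (\ref{f7}), (\ref{f8}) and (\ref{f10}) before substituting into (\ref{f5}) and (\ref{f6}), whereas you simply set $\lambda=0$ in (\ref{f6}) and (\ref{f5}) to get $b\,k_2(\mu,\partial)=0$ and $2b\,k_1(\mu,\partial)=0$ directly --- valid precisely because $b\neq 0$ is the standing hypothesis of this lemma (and this is exactly the specialization the paper itself uses in Subcase b1 of Lemma \ref{lem3}). Both treatments are complete; yours trades the paper's explicit intermediate formulas for quicker specializations.
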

\begin{proof}
In Case (C) in Lemma \ref{lem1}, $g_2(\lambda,\partial)=\partial+(a-1)\lambda+b+c$, $h_2(\lambda,\partial)=\partial+\lambda+c$. By (\ref{f11}),  (\ref{f1}) becomes
\begin{align}
((a-1)\lambda-\mu+b)h_1(\lambda+\mu,\partial)=h_1(\mu,\lambda+\partial)(\partial+\lambda+c)+(\mu+\lambda+\partial+c)h_1(-\lambda-\partial,\partial)\nonumber\\
\label{g7}-(\partial+\lambda+\mu+c)h_1(\mu,\partial).
\end{align}
Setting $\lambda=0$ in (\ref{g7}), one can obtain
\begin{align}
\label{h1}bh_1(\mu,\partial)=(\mu+\partial+c)h_1(-\partial,\partial).
\end{align}

Since $b\neq 0$, according to (\ref{h1}), we can assume that $h_1(\mu,\partial)=(\mu+\partial+c)p(\partial)$, where $p(\partial)\in \mathbb{C}[\partial]$. Plugging it into (\ref{g7}), we can get
\begin{align}
\label{h2}(\partial+a\lambda+b)p(\partial)=(\partial+\lambda+c)p(\lambda+\partial).
\end{align}
By (\ref{h2}), when $a=1$ and $c=b$, $p(\partial)=d_0$, where $d_0\in \mathbb{C}$; when $a\neq 1$ or $c\neq b$, $p(\partial)=0$.
Therefore, when $a=1$ and $c=b$, $h_1(\mu,\partial)=(\mu+\partial+c)d_0$; when $a\neq 1$ or $c\neq b$, $h_1(\mu,\partial)=0$. Then we can discuss the case when $b\neq 0$ in the following cases.

{\bf Subcase c1}: when $b\neq 0$ and $a\neq 1$ or $c\neq b$, $h_1(\mu,\partial)=0$. Hence, $g_1(\lambda,\partial)=0$. By comparing the degree of $\lambda$
in (\ref{f7}), one  can get that the degree of $\lambda$ in $k_1(\lambda,\partial)$
is $0$. Consequently, we assume that $k_1(\lambda,\partial)=k_1(\partial)$, where $k_1(\partial)\in \mathbb{C}[\partial]$. Taking it into (\ref{f5}), we have
\begin{align}
(2(a-1)\lambda+\partial+2b+c)k_1(\partial)=k_1(\lambda+\partial)(\partial+\lambda+c).
\end{align}
Therefore, $k_1(\lambda,\partial)=k_1(\partial)=0$. By comparing the degree of $\mu$ in (\ref{f10}), we can obtain
that the degree of $\partial$ in $k_2(\lambda,\partial)$ is $0$. In addition, it can be obtained that the degree of
$\lambda$ in $k_2(\lambda,\partial)$ is $0$, by comparing the degree of $\mu$ in (\ref{f8}). So, assume that
$k_2(\lambda,\partial)=k_2$, where $k_2\in \mathbb{C}$. Putting it into (\ref{f6}), one can obtain that
$k_2((a-1)\lambda+b)=0$, i.e. $k_2=0$. Consequently, $k_2(\lambda,\partial)=0$. Therefore, in this case, $g_2(\lambda,\partial)=\partial+(a-1)\lambda+b+c$, $h_2(\lambda,\partial)=\partial+\lambda+c$, $h_1(\lambda,\partial)=g_1(\lambda,\partial)=k_1(\lambda,\partial)=k_2(\lambda,\partial)=0$.

{\bf Subcase c2}: when $b\neq 0$, $a=1$ and $c=b$, $h_1(\mu,\partial)=(\mu+\partial+c)d_0$. By
(\ref{f11}), one can get $g_1(\lambda,\partial)=(c-\lambda)d_0$.
According to (\ref{f8}), we have
\begin{align}
(\lambda-\mu)d_0+k_2(\lambda,\partial)=k_2(\mu,\partial).
\end{align}
Therefore, $k_2(\lambda,\partial)=-d_0\lambda+p(\partial)$, where $p(\partial)\in \mathbb{C}[\partial]$. Taking it into
(\ref{f6}), we obtain
\begin{align}
(c-\mu)(-d_0(\lambda+\mu)+p(\partial))=(\partial+2c)(-d_0\mu+p(\lambda+\partial))-d_0(c-\lambda)(\mu+\partial+c)\nonumber\\
\label{g8}-(\mu+\partial+2c)(-d_0\mu+p(\partial)).
\end{align}
By comparing the degree of $\lambda$ in (\ref{g8}), one can get that the degree of $\partial$ in $p(\partial)$ is smaller than
$2$. Therefore, assume that $p(\partial)=p_0+p_1\partial$, where $p_0$, $p_1\in \mathbb{C}$.
Plugging it into (\ref{g8}), we obtain
\begin{align}
cp_0+d_0c^2+c(p_1+d_0)\partial=(p_1+d_0)\partial\lambda+2c(p_1+d_0)\lambda.
\end{align}
Therefore, $p_0=-cd_0$ and $p_1=-d_0$, i.e. $k_2(\lambda,\partial)=-d_0(\lambda+\partial+c)$.
Taking it into (\ref{f5}), we have
\begin{align}
(c-\mu)k_1(\lambda+\mu,\partial)=k_1(\mu,\lambda+\partial)(\partial+\lambda+c)-d_0^2(c-\lambda)(2\mu+\lambda+2\partial+2c)\nonumber\\
\label{h3}-k_1(\mu,\partial)(\mu+\partial+2c).
\end{align}
Setting $\lambda=0$ in (\ref{h3}), one can get $k_1(\mu,\partial)=-d_0^2(\mu+\partial+c)$.
Substituting $k_1(\lambda, \partial)$ and $k_2(\lambda,\partial)$ into (\ref{f13}) and (\ref{f14}) respectively, we have $d_0=0$.
 Therefore,  $k_1(\lambda,\partial)=k_2(\lambda,\partial)=0$. As a result, $g_2(\lambda,\partial)=\partial+2b$, $h_2(\lambda,\partial)=\partial+\lambda+b$, $h_1(\lambda,\partial)=g_1(\lambda,\partial)=k_1(\lambda,\partial)=k_2(\lambda,\partial)=0$.

According to the results in Subcase c1 and Subcase c2, we obtain this lemma.
\end{proof}

Finally, we will consider Case (C) when $b=0$.
\begin{lem}\label{lem6}
In Case (C) in Lemma \ref{lem1}, when $b=0$, we have the following results:\\
(C2) $h_1(\lambda,\partial)=g_1(\lambda,\partial)=k_1(\lambda,\partial)=k_2(\lambda,\partial)=0$, $g_2(\lambda,\partial)
 =\partial+(a-1)\lambda+c$ and $h_2(\lambda,\partial)=\partial+\lambda+c$;\\
(C3) When $a=1$, $g_2(\lambda,\partial)=\partial+c$,
$h_2(\lambda,\partial)=\partial+\lambda+c$, $h_1(\lambda,\partial)=g_1(\lambda,\partial)=k_1(\lambda,\partial)=0$, $k_2(\lambda,\partial)=k_2$, where $k_2\in \mathbb{C}\backslash \{0\}$.
\end{lem}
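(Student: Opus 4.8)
The plan is to follow the same strategy used for Case (C) with $b\neq 0$ in Lemma~\ref{lem5}, but to replace the step that divided by $b$ (no longer available) with the $\partial=0$ substitution and rational-expression technique of Lemma~\ref{lem4}. First I would substitute the Case (C) values $g_2(\lambda,\partial)=\partial+(a-1)\lambda+c$ and $h_2(\lambda,\partial)=\partial+\lambda+c$ into the defining equations and, using (\ref{f11}) to write $g_1(\lambda,\partial)=h_1(-\lambda-\partial,\partial)$, reduce (\ref{f1}) to its Case (C) form (\ref{g7}) with $b=0$. Setting $\lambda=0$ there immediately yields $h_1(-\partial,\partial)=0$, so that $(\lambda+\partial)$ divides $h_1(\lambda,\partial)$; this is the $b=0$ degeneration of (\ref{h1}) and shows that the ``division by $b$'' shortcut of Lemma~\ref{lem5} is unavailable.

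The core of the argument is to determine $h_1$ (hence $g_1$). Setting instead $\partial=0$ in the reduced (\ref{f1}) and writing $h(\mu):=h_1(\mu,0)$, I would solve for $h_1(\lambda,\partial)$ as a rational expression with denominator $\partial+c$ and numerator linear in the values $h(\lambda),h(-\partial),h(\lambda+\partial)$, exactly parallel to (\ref{f20}). Substituting this expression back into the reduced (\ref{f1}) produces a single polynomial identity in $h$, the analogue of (\ref{f21}). A degree count on the top monomials in $\lambda,\mu,\partial$ --- as in Subcases b3--b6 of Lemma~\ref{lem4} --- forces $\deg h$ to be small; combining this with the requirement that $\partial+c$ actually divide the numerator and with the reflection relation $g_1(\lambda,\partial)=h_1(-\lambda-\partial,\partial)$ then forces $h_1(\lambda,\partial)=g_1(\lambda,\partial)=0$ in every surviving subcase.

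Once $h_1=g_1=0$, I would determine $k_1$ and $k_2$. With these vanishing, (\ref{f5}) and (\ref{f6}) collapse to two-term recursions for $k_1$ and $k_2$ respectively; evaluating at $\lambda=0$ and imposing the symmetry constraints (\ref{f13}) and (\ref{f14}), together with the coupling identities (\ref{f7})--(\ref{f10}) (in which $h_2=\partial+\lambda+c$ is now a genuine degree-one factor), forces $k_1(\lambda,\partial)=0$ in all cases and $k_2(\lambda,\partial)=0$ unless $a=1$. When $a=1$ the recursion for $k_2$ degenerates and forces $k_2$ to be a constant, giving the data of (C3); for general $a$ everything vanishes, giving (C2). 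I would close by verifying directly that both surviving data sets satisfy all of (\ref{f1})--(\ref{f14}).

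The main obstacle is the degree/coefficient bookkeeping in the middle step: one must rule out all higher-degree candidates for $h$ while simultaneously respecting the divisibility by $\partial+c$ coming from the rational expression and the reflection $g_1(\lambda,\partial)=h_1(-\lambda-\partial,\partial)$. A secondary difficulty is keeping the case split on $a$ (notably the special value $a=1$) and on $c$ coherent between the determination of $h_1$ and the subsequent determination of $k_1,k_2$, since the nontrivial branch in (C3) survives only at $a=1$.
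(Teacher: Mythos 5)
Your overall strategy is the one the paper actually uses: set $\partial=0$ in the reduced form (\ref{g7}) of (\ref{f1}), write $h_1$ as a rational expression in $h(\mu)=h_1(\mu,0)$ with denominator $\partial+c$, do a degree count, and then finish with the $k_1,k_2$ recursions. Two remarks on the middle step, the second of which is a genuine gap. First, a minor point: the paper's degree constraints on $h$ come not from resubstituting into (\ref{f1}) alone but from the identity (\ref{g11}) obtained by feeding $g_1(\lambda,\partial)=h_1(-\lambda-\partial,\partial)$ into (\ref{f3}), supplemented by the special evaluation $\mu=-\lambda=-\partial$ of (\ref{g7}); you will almost certainly need (\ref{f3}) as well, since that is where the usable top-degree coefficients live.

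The genuine gap is your claim that the degree count, the divisibility by $\partial+c$, and the reflection relation ``force $h_1=g_1=0$ in every surviving subcase,'' after which you treat $k_1,k_2$ separately. This sequential structure cannot be carried out: there are branches in which a nonzero $h_1$ survives \emph{all} of (\ref{f1}), (\ref{f3}), (\ref{f11}) and the polynomiality requirement. Concretely, for $a=3$ the data $h(\lambda)=h_2(c\lambda+\lambda^2)$ gives $h_1(\lambda,\partial)=h_2(\lambda+\partial)(\lambda+\partial+c)$ and $g_1(\lambda,\partial)=h_2\lambda(\lambda-c)$, which is already polynomial, satisfies the reflection identity, and satisfies (\ref{f1}) and (\ref{f3}); it is eliminated only by pushing it into the $k_2$-equations (\ref{f8}) and (\ref{f6}) and deriving $h_2=0$ from the resulting coefficients (the paper's Subcase c6). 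Similarly, for $a=1$ the linear candidate $h(\lambda)=h_1\lambda$ gives $h_1(\lambda,\partial)=h_1(\lambda+\partial)$, $g_1(\lambda,\partial)=-h_1\lambda$, which again passes (\ref{g7}) and (\ref{f3}) and is only killed via (\ref{f8}) together with the symmetry (\ref{f13}) (Subcase c7). So the determination of $h_1$ and of $k_1,k_2$ cannot be decoupled as you propose; the nonzero-$h_1$ branches must be carried into the $W_\lambda W$ equations before they can be discarded. Since one of those branches occurs precisely at $a=1$, where the nontrivial conclusion (C3) also lives, this is exactly the place where your plan, as written, would stall.
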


\begin{proof}
When $b=0$, taking $\partial=0$ in (\ref{g7}), we have
\begin{align}
(\lambda+c)h_1(\mu,\lambda)=((a-1)\lambda-\mu)h_1(\lambda+\mu,0)-(\mu+\lambda+c)h_1(-\lambda,0)+(\mu+\lambda+c)h_1(\mu,0).
\end{align}
Therefore, $\lambda+c$ can divide $((a-1)\lambda-\mu)h_1(\lambda+\mu,0)-(\mu+\lambda+c)h_1(-\lambda,0)+(\lambda+\mu+c)h_1(\mu,0)$. Set
$h_1(\mu,0)=h(\mu)$, where $h(\mu)\in\mathbb{C}[\mu]$. Thus, we get
\begin{align}
\label{g9}h_1(\lambda,\partial)=\frac{((a-1)\partial-\lambda)h(\lambda+\partial)-(\partial+\lambda+c)h(-\partial)+(\lambda+\partial+c)h(\lambda)}{\partial+c}.
\end{align}
According to (\ref{f11}) and (\ref{g9}),
\begin{align}
\label{g10}g_1(\lambda,\partial)=\frac{(a\partial+\lambda)h(-\lambda)-(c-\lambda)h(-\partial)+(c-\lambda)h(-\lambda-\partial)}{\partial+c}.
\end{align}
Taking it into (\ref{f3}) and after some computations, we can obtain
\begin{align}
(\mu^2-\lambda^2+\partial\mu-\partial\lambda)h(-\lambda-\mu-\partial)+(\lambda^2+a\lambda\partial-a\mu\partial-\mu^2)h(-\lambda-\mu)\nonumber\\
+(\lambda^2-\partial\mu-ac\mu+(a-1)\mu\lambda+\partial\lambda)h(-\lambda-\partial)
+(ac\lambda-(a-1)\lambda\mu-\mu^2-\partial\mu+\partial\lambda)h(-\mu-\partial)\nonumber\\
+(ac\mu-\lambda\partial-ac\lambda+\partial\mu)h(-\partial)+(a(a-2)\lambda\partial+(a-1)\lambda\mu+a\mu\partial+\mu^2-ac\lambda)h(-\mu)\nonumber\\
\label{g11}+(ac\mu-a(a-2)\partial\mu-a\partial\lambda-(a-1)\lambda\mu-\lambda^2)h(-\lambda)=0.
\end{align}
Obviously, $h(\lambda)=0$ is a solution of (\ref{g11}).  Assume that $h(\lambda)=\sum_{i=0}^mh_i\lambda^i$, where $h_m\neq 0$. If $m\geq 2$, by comparing the coefficients of $\lambda\partial\mu^m$ in (\ref{g11}), one can have
\begin{align}
(m-a)(m-a+1)=0.
\end{align}
If $m\geq 3$, comparing the coefficients of
$\lambda^2\partial\mu^{m-1}$ and $\lambda\partial^2\mu^{m-1}$ in (\ref{g11}), we get
\begin{align}
m(m-3)(m-a+1)=0,\\
m(m-1)(m-a+1)=0,
\end{align}
Similarly, if $m\geq 4$, by comparing the coefficients of $\partial^2\lambda^2\mu^{m-2}$ in (\ref{g11}), we obtain
\begin{align}
m(m-1)(m+1)(m-4)=0.
\end{align}
Therefore, we only need to discuss the following cases:  $m=4$, $a=5$ and $h_4 \neq 0$; $m=3$, $a=4$ and $h_3 \neq 0$;
$m=2$, $a=2$ and $h_2 \neq 0$; $m=2$, $a=3$ and $h_2 \neq 0$; $m\leq 1$.

Setting $\mu=-\lambda=-\partial$ in (\ref{g7}), we get
\begin{align}
\label{g16}a\partial h_1(0,\partial)=(2\partial+c)h_1(-\partial,2\partial)+(\partial+c)h_1(-2\partial,\partial)-(\partial+c)h_1(-\partial,\partial).
\end{align}
Then by (\ref{h1}), we get $h_1(-\partial,\partial)=0$. Substituting (\ref{g9}) into (\ref{g16}), we can obtain
\begin{align}
\label{g12}((a^2-3a+1)\partial-(2a-1)c)h(\partial)+a(\partial+c)h(0)=(2a+3)(\partial+c)h(-\partial)-2(\partial+c)h(-2\partial).
\end{align}

{\bf Subcase c3}: $m=4$, $a=5$ and $h_4 \neq 0$. According to (\ref{g12}), one can get
\begin{align}
\label{g13}(11\partial-9c)h(\partial)+5(\partial+c)h(0)=13(\partial+c)h(-\partial)-2(\partial+c)h(-2\partial).
\end{align}
Taking $h(\lambda)=\sum_{i=0}^4h_i\lambda^i$ into (\ref{g13}), and by comparing the coefficients of $\partial^5$ in (\ref{g13}), we can have $h_4=0$. Therefore, we get a contradiction. As a result, this case does not hold.

{\bf Subcase c4}: $m=3$, $a=4$ and $h_3 \neq 0$. By (\ref{g12}), we obtain
\begin{align}
\label{g14}(5\partial-7c)h(\partial)+4(\partial+c)h(0)=11(\partial+c)h(-\partial)-2(\partial+c)h(-2\partial).
\end{align}
Taking $h(\lambda)=\sum_{i=0}^3h_i\lambda^i$ into (\ref{g14}), and by comparing the coefficients of $\partial^4$, one can get $h_3=0$. It contradicts to our assumption. Therefore, this case does not hold.

{\bf Subcase c5}: $m=2$, $a=2$ and $h_2 \neq 0$. By (\ref{g12}), we have
\begin{align}
\label{g15}-(\partial+3c)h(\partial)+2(\partial+c)h(0)=7(\partial+c)h(-\partial)-2(\partial+c)h(-2\partial).
\end{align}
Plugging $h(\lambda)=\sum_{i=0}^2h_i\lambda^i$ into (\ref{g15}), and by comparing the coefficients of constant term, $\partial$, and $\partial^2$, we can obtain that $h_0=0$ and $h_1=ch_2$, i.e. $h(\lambda)=h_2(c\lambda+\lambda^2)$. Substituting it into (\ref{g9}), one can obtain
\begin{align}
h_1(\lambda,\partial)=ch_2\frac{(\partial+\lambda)(\partial+\lambda+c)}{\partial+c}.
\end{align}
 Therefore, $h_1(\lambda,\partial)=0$ and $c=0$. By (\ref{f11}), $g_1(\lambda,\partial)=0$.
 By comparing the coefficients of $\lambda$ in (\ref{f7}), we can assume that $k_1(\lambda,\partial)=k_1(\partial)$,
  Then, according to (\ref{f5}), one can obtain
\begin{align}
\label{g17}(2\lambda+\partial)k_1(\partial)=(\lambda+\partial)k_1(\lambda+\partial).
\end{align}
 Consequently, $k_1(\lambda,\partial)=k_1(\partial)=0$. Because (\ref{f6}), (\ref{f8}) and (\ref{f5}), (\ref{f7}) are the same, we can get $k_2(\lambda,\partial)=0$ in a similar way. Therefore, we obtain Case (C2) with $a=2$.

{\bf Subcase c6}: $m=2$, $a=3$ and $h_2 \neq 0$. By (\ref{g12}), one can get
\begin{align}
\label{g18}(\partial-5c)h(\partial)+3(\partial+c)h(0)=9(\partial+c)h(-\partial)-2(\partial+c)h(-2\partial).
\end{align}
Taking $h(\lambda)=\sum_{i=0}^2h_i\lambda^i$ into (\ref{g18}) and comparing the coefficients of constant term, $\partial$ and $\partial^2$, we can get that $h_0=0$ and $h_1=ch_2$ , i.e. $h(\lambda)=h_2(c\lambda+\lambda^2)$. Substituting it into (\ref{g9}), we get $h_1(\lambda,\partial)=h_2(\lambda+\partial)(\lambda+\partial+c)$. By (\ref{f11}), we have
$g_1(\lambda,\partial)=h_2\lambda(\lambda-c)$. Then it is easy to check that (\ref{f1}) and (\ref{f3}) hold. By (\ref{f8}),
\begin{align}
h_2(\lambda+\mu+\partial)(\lambda-\mu)+k_2(\lambda,\partial)=k_2(\mu,\partial).
\end{align}
Therefore, $k_2(\lambda,\partial)=-h_2(\lambda^2+\lambda\partial)+p(\partial)$ , where $p(\partial)\in \mathbb{C}[\partial]$.
 Taking it into (\ref{f6}), we obtain
\begin{align}
(2\lambda-\mu)(-h_2(\lambda+\mu)(\lambda+\mu+\partial)+p(\partial))=(2\lambda+\partial+c)(-h_2\mu(\mu+\partial+\lambda)+p(\lambda+\partial))\nonumber\\
\label{g19}-h_2\lambda(\lambda-c)(\partial+\mu+c)-(2\lambda+\mu+\partial+c)(-h_2\mu(\mu+\partial)+p(\partial)).
\end{align}
By comparing the degree of  $\lambda$  in (\ref{g19}), it is easy to see that the degree of $p(\partial)$ is smaller than $3$. Therefore, assume  that $p(\partial)=p_0+p_1\partial+p_2\partial^2$. Substituting it into (\ref{g19}), and by comparing the coefficients of $\lambda^3$, we get $p_2=-h_2$. By comparing the coefficients of $\lambda^2\partial$, we can get $h_2=0$. So, we get a contradiction. Therefore, this case does not hold.

{\bf Subcase c7}: $m\leq 1$. Putting $h(\lambda)=h_0+h_1\lambda$ into (\ref{g9}), we get
\begin{align}
\label{g20}h_1(\lambda,\partial)=\frac{(a-1)\partial(h_1\partial+h_0)+\lambda((a-1)h_1\partial-h_0)}{\partial+c}+h_1\lambda+h_1\partial.
\end{align}
Then we can give a discussion about whether $h_1$ is equal to $0$.

When $h_1 \neq 0$, according to that $\partial+c$ can divide $(a-1)\partial(h_1\partial+h_0)+\lambda((a-1)h_1\partial-h_0)$, we can discuss the following cases:  $a=1$, $h_0=0$; $a=0$, $h_0=ch_1$ and $c=0$, $h_0=0$.

When $c=0$ and $h_0=0$, by (\ref{g20}), we have $h_1(\lambda,\partial)=ah_1(\lambda+\partial)$. Taking it into (\ref{g7}),
 we can obtain $a(a-1)h_1\lambda=0$. Consequently, $a=0$ or $a=1$. Therefore, this case when $a=0$ can be merged into  the case when $a=0$ and $h_0=ch_1$ , and this case when $a=1$ can be merged into the case when $a=1$ and $h_0=0$.

When $a=1$ and $h_0=0$, by (\ref{g20}),  $h_1(\lambda,\partial)=h_1(\lambda+\partial)$. According to (\ref{f11}), we get $g_1(\lambda,\partial)=-h_1\lambda$. Then it is easy to check that
(\ref{g7}) and (\ref{f3}) hold. Setting $\partial=0$ in (\ref{f8}), we can obtain
\begin{align}
\label{g21}h_1(\mu+\lambda)(\lambda-\mu)+(\mu+\lambda+c)k_2(\lambda,0)=(\mu+\lambda+c)k_2(\mu,0).
\end{align}
Therefore, according to (\ref{g21}), we can assume that $k_2(\lambda,0)=k_2(\lambda)=k_0+k_1\lambda$, where $k_0$, $k_1\in \mathbb{C}$. Taking it into (\ref{g21}), we obtain $h_1(\lambda+\mu)+k_1(\lambda+\mu+c)=0$. Thus,
$k_1=-h_1$ and $c=0$. Setting $\partial=0$ in (\ref{f13}), and substituting $k_2(\lambda)=k_0-h_1\lambda$ into it, one can get $h_1=0$. We get a contradiction. Therefore, this case does not hold.

When $a=0$ and $h_0=ch_1$, by (\ref{g20}), we get $h_1(\lambda,\partial)=0$. When $h_1=0$, according to (\ref{g20}), we also get $h_1(\lambda,\partial)=0$.
Therefore, we can discuss the two cases together.

When $b=0$, $h_1(\lambda,\partial)=0$. By (\ref{f11}), $g_1(\lambda,\partial)=0$. Then (\ref{g7}) and (\ref{f3}) hold.
 By comparing the degree of  $\lambda$ in (\ref{f7}), we can get that the degree of $\lambda$ in $k_1(\lambda,\partial)$ is $0$.
 Therefore, set $k_1(\lambda,\partial)=k_1(\partial)$. Substituting it into (\ref{f5}), we have
\begin{align}
(2(a-1)\lambda+\partial+c)k_1(\partial)=(\partial+\lambda+c)k_1(\lambda+\partial).
\end{align}
Therefore, we get that $k_1(\lambda,\partial)=0$ or if $a=\frac{3}{2}$, $k_1(\lambda,\partial)=k_1$, where $k_1\in \mathbb{C}$. By comparing the degree of $\lambda$ in (\ref{f8}), we can get that the degree of $\lambda$ in $k_2(\lambda,\partial)$ is $0$. Therefore, set $k_2(\lambda,\partial)=k_2(\partial)$, where $ k_2(\partial)\in \mathbb{C}[\partial]$. Taking it into (\ref{f6}), we get
\begin{align}
(2(a-1)\lambda+\partial+c)k_2(\partial)=((a-1)\lambda+\partial+c)k_2(\lambda+\partial).
\end{align}
Therefore, $k_2(\lambda,\partial)=0$ or when $a=1$, $k_2(\lambda,\partial)=k_2$, where $k_2\in \mathbb{C}$. In addition, by (\ref{f10}), $k_1(\lambda,\partial)=0$. Therefore, in this case, we have Case (C2) and Case (C3). It is easy to check that in the two cases, (\ref{f1})-(\ref{f14}) hold.

The proof is finished.
\end{proof}

\begin{thm}\label{t1}
When $a\neq 1$, all compatible left-symmetric conformal algebraic structures over $\mathcal{W}(a,b)$ such that $\mathbb{C}[\partial]L$ is a left-symmetric conformal subalgebra  are as follows\\
(1) \begin{align}
L_\lambda L=(\partial+\lambda+c)L,~~~~~L_\lambda W=(\partial+a\lambda+b)W,\\
W_\lambda L=0,~~~~~W_\lambda W=0,
\end{align}
where $c\in \mathbb{C}$;\\
(2) \begin{align}
L_\lambda L=(\partial+\lambda+c)L,~~~~~L_\lambda W=(\partial+a\lambda+b+c)W,\\
W_\lambda L=cW,~~~~~W_\lambda W=0,
\end{align}
where $c\in \mathbb{C}\backslash\{0\}$;\\
(3) \begin{align}
L_\lambda L=(\partial+\lambda+c)L,~~~~~L_\lambda W=(\partial+(a-1)\lambda+b+c)W,\\
W_\lambda L=(\partial+\lambda+c)W,~~~~~W_\lambda W=0,
\end{align}
where $c\in \mathbb{C}$.

When $b\neq 0$, all compatible left-symmetric conformal algebraic structures over $\mathcal{W}(1,b)$ such that $\mathbb{C}[\partial]L$ is a left-symmetric conformal subalgebra  are as follows: Case (1) when $a=1$,
Case (2) when $a=1$, Case (3) when $a=1$, and \\
(4) \begin{align}
L_\lambda L=(\partial+\lambda+2b)L,~~~~~L_\lambda W=(\partial+\lambda+b)W,\\
W_\lambda L=0,~~~~~W_\lambda W=k_1L,
\end{align}
where $k_1\neq 0$;\\
(5) \begin{align}
L_\lambda L=(\partial+\lambda+b)L,~~~~~L_\lambda W=dL+(\partial+\lambda+2b)W,\\
W_\lambda L=dL+bW,~~~~~W_\lambda W=-\frac{d^2}{b}L-dW,
\end{align}
where $d\neq 0$.

All compatible left-symmetric conformal algebraic structures over $\mathcal{W}(1,0)$ such that $\mathbb{C}[\partial]L$ is a left-symmetric conformal subalgebra  are as follows: Case (1) when $a=1$  and $b=0$, Case (2) when $a=1$ and $b=0$, Case (3) when $a=1$ and $b=0$, Case (4) when $b=0$, and\\
(6)\begin{align}
L_\lambda L=(\partial+\lambda+c)L,~~~~~L_\lambda W=(\partial+\lambda)W,\\
W_\lambda L=0,~~~~~W_\lambda W=k_2W,
\end{align}
where $k_2\in \mathbb{C}\backslash \{0\}$;\\
(7)\begin{align}
L_\lambda L=(\partial+\lambda)L,~~~~~L_\lambda W=(\partial+\lambda)W,\\
W_\lambda L=0,~~~~~W_\lambda W=k_1L+k_2W,
\end{align}
where $k_1$, $k_2\in \mathbb{C}\backslash \{0\}$;\\
(8)\begin{align}
L_\lambda L=(\partial+\lambda+c)L,~~~~~L_\lambda W=h_1L+(\partial+\lambda)W,\\
W_\lambda L=h_1L,~~~~~W_\lambda W=\frac{h_1(h_1-k_2)}{c}L+k_2W,
\end{align}
where $c$, $h_1\in \mathbb{C}\backslash \{0\}$, $k_2\in \mathbb{C}$;\\
(9)\begin{align}
L_\lambda L=(\partial+\lambda)L,~~~~~L_\lambda W=h_1L+(\partial+\lambda)W,\\
W_\lambda L=h_1L,~~~~~W_\lambda W=k_1L+h_1W,
\end{align}
where  $h_1$, $k_1\in \mathbb{C}\backslash \{0\}$;\\
(10)\begin{align}
L_\lambda L=(\partial+\lambda+c)L,~~~~~L_\lambda W=(\partial+\lambda+c)W,\\
W_\lambda L=cW,~~~~~W_\lambda W=k_1L+k_2W,
\end{align}
where $c\in \mathbb{C}\backslash \{0\}$, $(k_1,k_2)\in \mathbb{C}^2\backslash(0,0)$;\\
(11) \begin{align}
L_\lambda L=(\partial+\lambda+c)L,~~~~~L_\lambda W=(\partial+c)W,\\
W_\lambda L=(\partial+\lambda+c)W,~~~~~W_\lambda W=k_2W,
\end{align}
where $c\in \mathbb{C}$, $k_2\in \mathbb{C}\backslash\{0\}$.
\end{thm}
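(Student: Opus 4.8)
The plan is to assemble the classification directly from Lemmas \ref{lem1}--\ref{lem6}, which between them solve the full system (\ref{f1})--(\ref{f14}) for every admissible choice of structure functions. Recall that, once we impose that $\mathbb{C}[\partial]L$ is a left-symmetric conformal subalgebra, the entire structure is encoded by the seven polynomials $f,g_1,g_2,h_1,h_2,k_1,k_2$ via (\ref{e1})--(\ref{e2}), with $f(\lambda,\partial)=\partial+\lambda+c$ forced by Example \ref{r1}. The six remaining functions satisfy (\ref{f1})--(\ref{f14}) if and only if the structure is a compatible left-symmetric conformal algebra; hence it suffices to enumerate their solutions and read them back into the brackets
\begin{align*}
L_\lambda L&=f(\lambda,\partial)L,\qquad L_\lambda W=g_1(\lambda,\partial)L+g_2(\lambda,\partial)W,\\
W_\lambda L&=h_1(\lambda,\partial)L+h_2(\lambda,\partial)W,\qquad W_\lambda W=k_1(\lambda,\partial)L+k_2(\lambda,\partial)W.
\end{align*}

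First I would invoke Lemma \ref{lem1} to split into the three mutually exclusive cases (A), (B), (C) for the pair $(g_2,h_2)$, and then feed each into the subsequent lemmas: Case (A) is resolved by Lemma \ref{lem2}, Case (B) by Lemma \ref{lem3} (for $b\neq 0$) and Lemma \ref{lem4} (for $b=0$), and Case (C) by Lemma \ref{lem5} (for $b\neq 0$) and Lemma \ref{lem6} (for $b=0$). Since each of those lemmas closes by checking that its solutions satisfy (\ref{f1})--(\ref{f14}), no separate sufficiency argument is needed; the remaining work is the \emph{completeness} of the list and its organization by the values of $a$ and $b$.

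The core step is the bookkeeping that matches subcases to the eleven displayed families. For $a\neq 1$ every subcase carrying the hypothesis $a=1$ is discarded, leaving only (A1), (B1)/(B3), and (C1)/(C2); substituting their formulas into the brackets above yields exactly families (1), (2), (3). For $a=1$ with $b\neq 0$ the survivors are (A1), (A2), (B1), (B2), (C1), giving families (1), (4), (2), (5), (3) respectively --- for instance (A2) forces $c=2b$, $g_2=\partial+\lambda+b$, $k_1\neq 0$ and all other functions zero, which is precisely family (4). For $a=1$ with $b=0$ one collects (A1), (A2) (now merging into family (4) at $b=0$), (A3), (A4), (A5), (A6), together with (B3), (B4) and (C2), (C3), producing families (1), (4), (6), (7), (8), (9), (2), (10), (3), (11); here one must carry along the side constraints (the genericity conditions $k_1\neq 0$, $(k_1,k_2)\neq(0,0)$, $h_1\neq 0$, and the standing hypothesis $c\neq 0$ in Case (B)) so that each parameter range in the statement is reproduced exactly.

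The hard part will be ensuring \emph{exhaustiveness together with non-redundancy}: that the three cases of Lemma \ref{lem1} really are exhaustive, that the $b=0$ versus $b\neq 0$ dichotomy inside Lemmas \ref{lem3}--\ref{lem6} covers all coefficient values, and that the stated nonvanishing conditions are exactly those under which a subcase is genuinely new rather than a degeneration of family (1) or (2) (e.g.\ $k_1\neq 0$ in (4), $d\neq 0$ in (5), $(k_1,k_2)\neq(0,0)$ in (10)). Once this accounting is checked, reading off the $\lambda$-brackets of each surviving subcase completes the proof.
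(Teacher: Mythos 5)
Your proposal is correct and follows exactly the route the paper takes: the theorem is obtained by collecting the cases of Lemmas \ref{lem1}--\ref{lem6} and sorting them by the values of $a$ and $b$, and your case-by-case matching of subcases (A1)--(A6), (B1)--(B4), (C1)--(C3) to families (1)--(11) agrees with the paper's (one-line) proof. The only difference is that you spell out the bookkeeping explicitly, which the paper leaves implicit.
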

\begin{proof}
This theorem can be directly obtained by Lemma \ref{lem2}, Lemma \ref{lem3}, Lemma \ref{lem4}, Lemma \ref{lem5} and Lemma \ref{lem6}.
\end{proof}

\begin{cor}
Coeff$(\mathcal{W}(a,b))$ has the following compatible left-symmetric algebraic structures:\\
(1)\begin{align}
L_m\circ L_n=cL_{m+n+1}-(n+1)L_{m+n},~~~W_m\circ L_n=0,\\
~L_m\circ W_n=((a-1)(m+1)-(n+1))W_{m+n}+bW_{m+n+1},~~~~W_m\circ W_n=0,
\end{align}
where $c\in \mathbb{C}$;\\
(2)\begin{align}
L_m\circ L_n=cL_{m+n+1}-(n+1)L_{m+n},~~~W_m\circ L_n=cW_{m+n+1},\\
~L_m\circ W_n=((a-1)(m+1)-(n+1))W_{m+n}+(b+c)W_{m+n+1},~~~~W_m\circ W_n=0,
\end{align}
where $c\in \mathbb{C}\backslash\{0\}$;\\
(3)\begin{align}
L_m\circ L_n=cL_{m+n+1}-(n+1)L_{m+n},~~~~W_m\circ L_n=-(n+1)W_{m+n}+cW_{m+n+1},\\
L_m\circ W_n=((a-2)(m+1)-(n+1))W_{m+n}+(b+c)W_{m+n+1},~~~~W_m\circ W_n=0,
\end{align}
where $c\in \mathbb{C}$.

Coeff$(\mathcal{W}(1,b))$ also has the following compatible left-symmetric algebraic structures\\
(4)\begin{align}
L_m\circ L_n=2bL_{m+n+1}-(n+1)L_{m+n},~~~~L_m\circ W_n=-(n+1)W_{m+n}+bW_{m+n+1},\\
W_m\circ L_n=0,~~~~W_m\circ W_n=k_1L_{m+n+1},
\end{align}
where $k_1\neq 0$.

In particular, when $b\neq 0$, there also exist the following compatible left-symmetric algebraic structures on Coeff$(\mathcal{W}(1,b))$\\
(5)\begin{align}
L_m\circ L_n=bL_{m+n+1}-(n+1)L_{m+n},~~~~L_m\circ W_n=dL_{m+n+1}-(n+1)W_{m+n}+2bW_{m+n+1},\\
W_m\circ L_n=dL_{m+n+1}+bW_{m+n+1},~~~~W_m\circ W_n=-\frac{d^2}{b}L_{m+n+1}-dW_{m+n+1},
\end{align}
where $d\neq 0$.

Moreover, there also exist the following compatible left-symmetric algebraic structures on Coeff$(\mathcal{W}(1,0))$:\\
(6)\begin{align}
L_m\circ L_n=cL_{m+n+1}-(n+1)L_{m+n},~~~~L_m\circ W_n=-(n+1)W_{m+n},\\
W_m\circ L_n=0,~~~~W_m\circ W_n=k_2W_{m+n+1},
\end{align}
where $k_2\in \mathbb{C}\backslash \{0\}$;\\
(7)\begin{align}
L_m\circ L_n=-(n+1)L_{m+n},~~~~L_m\circ W_n=-(n+1)W_{m+n},\\
W_m\circ L_n=0,~~~~W_m\circ W_n=k_1L_{m+n+1}+k_2W_{m+n+1},
\end{align}
where $k_1$, $k_2\in \mathbb{C}\backslash \{0\}$;\\
(8)\begin{align}
L_m\circ L_n=cL_{m+n+1}-(n+1)L_{m+n},~~~~L_m\circ W_n=h_1L_{m+n+1}-(n+1)W_{m+n},\\
W_m\circ L_n=h_1L_{m+n+1},~~~~W_m\circ W_n=\frac{h_1(h_1-k_2)}{c}L_{m+n+1}+k_2W_{m+n+1},
\end{align}
where $c$, $h_1\in \mathbb{C}\backslash \{0\}$, $k_2\in \mathbb{C}$;\\
(9)\begin{align}
L_m\circ L_n=cL_{m+n+1}-(n+1)L_{m+n},~~~~L_m\circ W_n=h_1L_{m+n+1}-(n+1)W_{m+n},\\
W_m\circ L_n=h_1L_{m+n+1},~~~~W_m\circ W_n=k_1L_{m+n+1}+h_1W_{m+n+1},
\end{align}
where $h_1$, $k_1\in \mathbb{C}\backslash\{0\}$;\\
(10)\begin{align}
L_m\circ L_n=cL_{m+n+1}-(n+1)L_{m+n},,~~~~L_m\circ W_n=cW_{m+n+1}-(n+1)W_{m+n},\\
W_m\circ L_n=cW_{m+n+1},~~~~W_m\circ W_n=k_1L_{m+n+1}+k_2W_{m+n+1},
\end{align}
where $c\in \mathbb{C}\backslash\{0\}$, $(k_1,k_2)\in \mathbb{C}^2\backslash (0,0)$;\\
(11) \begin{align}
L_m\circ L_n=cL_{m+n+1}-(n+1)L_{m+n},~~~~L_m\circ W_n=-(m+n+2)W_{m+n}+cW_{m+n+1},\\
W_m\circ L_n=-(n+1)W_{m+n}+cW_{m+n+1},~~~~W_m\circ W_n=k_2W_{m+n+1},
\end{align}
where $c\in \mathbb{C}$, $k_2\in \mathbb{C}\backslash\{0\}$.
\end{cor}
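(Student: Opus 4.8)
The plan is to solve the full system of functional equations (\ref{f1})--(\ref{f14}) for the six polynomials $g_1,g_2,h_1,h_2,k_1,k_2$, since Theorem \ref{t1} is precisely the complete list of its solutions, translated back into bracket form. The organizing principle is that the pair $(g_2,h_2)$ decouples from everything else and must be pinned down first; this is the content of Lemma \ref{lem1}. Indeed, equation (\ref{f4}) involves only $g_2$, and comparing the degrees in $\lambda$ and in $\partial$ on both sides forces the degree in $\partial$ to be at most one. Writing $g_2=g_{21}(\lambda)\partial+g_{22}(\lambda)$ converts (\ref{f4}) into the multiplicative relation $g_{21}(\lambda+\mu)=g_{21}(\lambda)g_{21}(\mu)$ together with an additive relation for $g_{22}$, whose only polynomial solutions give $g_2=0$ or $g_2=\partial+e\lambda+f$. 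Relation (\ref{f12}) then expresses $h_2$ through $g_2$, and substituting into (\ref{f2}) kills the $g_2=0$ branch and, by comparing the coefficient of $\lambda\partial$, forces $e\in\{a,a-1\}$ and fixes $f$; this produces exactly the three families (A), (B), (C).

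Next I would treat each family separately, substituting the now-known $(g_2,h_2)$ into the remaining equations and solving for $h_1$ (hence $g_1$, via (\ref{f11})) and for $k_1,k_2$. In each family the analysis bifurcates according to whether $a=1$ and whether $b=0$, because the recurring factor $(a-1)\lambda+b$ governs whether a degree or homogeneity obstruction forces a variable to vanish: when $a\neq1$ or $b\neq0$ this factor is a genuine nonzero polynomial and repeatedly collapses $h_1,k_1,k_2$ to constants or to $0$, whereas when $a=1$ and $b=0$ the obstruction disappears and extra one- or two-parameter solutions survive. Concretely, Lemma \ref{lem2} disposes of family (A) (subcases (A1)--(A6)), Lemmas \ref{lem3} and \ref{lem4} handle family (B) for $b\neq0$ and $b=0$ respectively, and Lemmas \ref{lem5} and \ref{lem6} handle family (C) for $b\neq0$ and $b=0$. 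Within each lemma the engine is uniform: set one of $\lambda,\mu,\partial$ to a convenient value (usually $\lambda=0$ or $\partial=0$) to obtain a reduced recursion, bound the polynomial degree of the unknown by matching leading coefficients, and then compare the remaining coefficients to determine the free constants.

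With all the lemmas in hand, the proof of Theorem \ref{t1} itself is pure assembly. Each subcase (A1)--(C3) is translated, via (\ref{e1})--(\ref{e2}) and $f=\partial+\lambda+c$, into one of the explicit bracket tables (1)--(11), and the parameter constraints attached to the subcase (the admissible values of $a,b,c$ and the nonvanishing conditions) decide in which of the three regimes---$a\neq1$; $a=1$ with $b\neq0$; $a=1$ with $b=0$---it belongs. One then checks that these regimes partition the solution set without omission or overlap: (A1) yields Case (1) for every $a$; (B1) together with (B3) yield Case (2) for all $b$; (C1) together with (C2) yield Case (3); the strictly-$a=1$ families (A2) and (B2) yield Cases (4) and (5); and the families (A3)--(A6), (B4), (C3) valid only for $a=1,\,b=0$ yield Cases (6)--(11).

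The hard part will be the degree and coefficient bookkeeping inside Lemma \ref{lem6} (family (C) with $b=0$), where the functional equation (\ref{g11}) a priori admits polynomial solutions of degree up to four, so one must compare several mixed coefficients (the $\lambda\partial\mu^m$, $\lambda^2\partial\mu^{m-1}$, and $\partial^2\lambda^2\mu^{m-2}$ terms) to eliminate $m=4,3,2$ before reaching the surviving low-degree solutions; the analogous high-degree elimination in Lemma \ref{lem4} is of the same flavor. Beyond these computations, the only genuine subtlety in the theorem proper is ensuring that the parameter ranges of overlapping subcases are merged consistently, so that no compatible structure is counted twice and none is lost.
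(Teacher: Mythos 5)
There is a genuine gap: your proposal is a (correct, and essentially identical to the paper's) outline of the proof of Theorem \ref{t1}, but the statement you were asked to prove is the Corollary about Coeff$(\mathcal{W}(a,b))$, and you never carry out the step that actually constitutes its proof. The paper's argument is one line: apply Proposition \ref{pp1} (the coefficient algebra of a compatible left-symmetric conformal structure is a compatible left-symmetric structure on the coefficient Lie algebra) to each case of Theorem \ref{t1}. The substantive content is the translation from the $\lambda$-products of Theorem \ref{t1} to the explicit products $x_m\circ y_n$: one must extract the $j$-th products $a_{(j)}b$ from each $\lambda$-product, feed them into formula (\ref{106}), namely $a_m\cdot b_n=\sum_{j}\binom{m}{j}(a_{(j)}b)_{m+n-j}$ with the rule $(\partial a)_k=-ka_{k-1}$, and then apply the index shift $\varphi(L_i)=L_{i+1}$, $\varphi(W_i)=W_{i+1}$ under which the paper presents Coeff$(\mathcal{W}(a,b))$ with the brackets (\ref{ww1})--(\ref{ww2}). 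For instance, $L_\lambda L=(\partial+\lambda+c)L$ gives $L_{(0)}L=(\partial+c)L$, $L_{(1)}L=L$, hence $L_m\cdot L_n=cL_{m+n}-nL_{m+n-1}$, which only after the shift becomes the stated $L_m\circ L_n=cL_{m+n+1}-(n+1)L_{m+n}$. None of formulas (1)--(11) of the Corollary is obtained without this computation, and your proposal does not mention Proposition \ref{pp1}, formula (\ref{106}), or the shift at all.

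A secondary remark: the portion of your proposal devoted to re-deriving Theorem \ref{t1} follows the paper's own route (Lemma \ref{lem1} pinning down $(g_2,h_2)$ via (\ref{f4}) and (\ref{f12}), then Lemmas \ref{lem2}--\ref{lem6} by cases on $a=1$ and $b=0$), so that part is redundant rather than wrong; but as a proof of the Corollary it stops exactly where the Corollary's proof begins.
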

\begin{proof}
This corollary can be directly obtained by Theorem \ref{t1} and Proposition \ref{pp1}.
\end{proof}

{\bf Acknowledgments}
{This work was supported by the Zhejiang Provincial Natural Science Foundation of China (No. LQ16A010011), the National Natural Science Foundation of China (No. 11501515, Tianyuan fund for mathematics No. 11626216),  the Scientific Research Foundation of Zhejiang Agriculture and Forestry University (No. 2013FR081) and the research and training program for students at Zhejiang Agriculture and Forestry University.}

\end{document}